\newcommand{\Aut}{\ensuremath{\operatorname{Aut}}}
\newcommand{\id}{\ensuremath{\text{\rm id}}}
\definecolor{cupgreen}{rgb}{0,0.498,0.208}
\definecolor{cupblue}{rgb}{0,0,.5}
\definecolor{cupred}{rgb}{1,0.04,0}
\definecolor{cuppink}{rgb}{0.925,0,0.545}
\definecolor{cupmagenta}{rgb}{0.624,0.161,0.424}
\definecolor{cupbrown}{rgb}{0.71,0.212,0.133}
\definecolor{cupgreen}{rgb}{0,0,0}
\definecolor{cupblue}{rgb}{0,0,0}
\definecolor{cupred}{rgb}{0,0,0}
\definecolor{cuppink}{rgb}{0,0,0}
\definecolor{cupmagenta}{rgb}{0,0,0}
\definecolor{cupbrown}{rgb}{0,0,0}
\definecolor{TITLE}{rgb}{0,0,0}
\definecolor{midblue}{rgb}{0.00,0.0,0.80}
\definecolor{darkblue}{rgb}{0.00,0.00,0.45}
\definecolor{SECTION}{rgb}{0.50,0.00,1.00}
\definecolor{THM}{rgb}{0.8,0,0.1}
\definecolor{SEC}{rgb}{0,0,1}
\newcommand{\aut}{\mathrm{Aut}}
\newtheorem{theorem}{{\color{THM} Theorem}}[section]
\DeclareRobustCommand{\stirling}{\genfrac\{\}{0pt}{}}
\newtheorem{lemma}[theorem]{{\color{THM}Lemma}}
\newtheorem{corollary}[theorem]{{\color{THM}Corollary}}
\theoremstyle{definition}
\numberwithin{equation}{section}
\date{}
\title{Number of colors needed to break symmetries of a graph by an arbitrary edge coloring}
\author{Saeid Alikhani\thanks{alikhani@yazd.ac.ir}}
\author{Mohammad H. Shekarriz\thanks{mhshekarriz@yazd.ac.ir}}
\affil{Department of Mathematics, Yazd University, 89195-741, Yazd, Iran.}
\begin{document}
	
	\maketitle
	\begin{abstract}
		A coloring is distinguishing (or symmetry breaking) if no non-identity automorphism preserves it. The distinguishing threshold of a graph $G$, denoted by $\theta(G)$, is the minimum number of colors $k$ so that every $k$-coloring of $G$ is distinguishing. We generalize this concept to edge-coloring by defining an alternative index $\theta'(G)$. We consider $\theta'$ for some families of graphs and find its connection with edge-cycles of the automorphism group. Then we show that $\theta'(G)=2$ if and only if $G\simeq K_{1,2}$ and $\theta'(G)=3$ if and only if $G\simeq P_4, K_{1,3}\textnormal{ or } K_3$. Moreover, we prove some auxiliary results for graphs whose distinguishing threshold is 3 and show that although there are infinitely many such graphs, but they are not line graphs. Finally, we compute $\theta'(G)$ when $G$ is the Cartesian product of simple prime graphs.
		
		\noindent\textbf{Keywords:} {distinguishing coloring, distinguishing threshold, edge-distinguishing threshold}
		
		\noindent\textbf{Mathematics Subject Classification}:  05C09, 05C15, 05C25, 05C30
	\end{abstract}

	\section{Introduction}\label{intro}
	Symmetries are not always desirable, as sometimes it can make situations confusing. Breaking graphs' symmetries via coloring is an old and thriving subject in graph theory which is started in 1977, when Babai considered infinite graphs whose symmetries can be broken using only two colors~\cite{Babai1977}. The concept was redefined by Albertson and Collins~\cite{albertson1996symmetry} in 1996 as \emph{distinguishing coloring} of a graph $G$ which is a vertex coloring that is only preserved by the identity automorphism. The \emph{distinguishing number} $D(G)$ is the minimum number of colors require for such a coloring~\cite{albertson1996symmetry}.
	
	There are many results about the distinguishing number, see for example~\cite{Collins2006} by Collins and Trenk and~\cite{klavzar2006} by Klav\v{z}ar, Wong and Zhu who independently showed that $\Delta+1$ colors are enough to distinguishingly color a connected graph, or~\cite{Imrich2007Infinite} by Imrich, Klav\v{z}ar and Trofimov who considered breaking symmetries of infinite graphs, or~\cite{Bogstad2004} by Bogstad and Cowen and~\cite{Imrich2006CartPower} by Imrich and Klav\v{z}ar who considered distinguishing the Cartesian products. Some bounds were given by Alikhani and Soltani~\cite{Alikhani2018lexico} for the distinguishing number of the lexicographic product of two connected graphs and the corona product was also studied by them for their distinguishing indices~\cite{Alikhani2017corona}.
	
	Furthermore, within the last two decades some generalizations to this type of coloring were generated. For example, Collins and Trenk~\cite{Collins2006} considered distinguishing colorings that are also proper and Kalinowski, Pil\'sniak and Wo\'zniak~\cite{Kalinowski2016Total} considered distinguishing total colorings. Moreover, Imrich et al.~\cite{Imrich2014Endo} generalized the concept by considering breaking graphs' endomorphisms instead of automorphisms, Ellingham and Schroeder~\cite{Ellingham2011} broke symmetries of graphs via partitioning and Laflamme, Nguyen Van Th\'{e} and Sauer~\cite{Laflamme2010} defined the distinguishing number for digraphs and posets.
	
	Among these generalizations, distinguishing graphs via edge colorings, which first studied by Kalinowski and Pil\'sniak~\cite{Kalinowski2015Edge}, is important to understand this paper. They defined the \emph{distinguishing index}, $D'(G)$, to be the minimum number of colors require to color edges of $G$ such that it is not preserve by non-identity automorphisms of $G$. Subsequently, they showed that for a connected graph we have $D'(G)\leq \Delta$ when $\vert G\vert \geq 6$ and $D'(G)\leq D(G)+1$ when $\vert G\vert \geq 3$, both of which were generalized to infinite graphs, see~\cite{broere2015} by Broere and Pil\'sniak and~\cite{Imrich2017Bounds} by Imrich et al.
	
	Two vertex colorings $c_1$ and $c_2$ of a graph $G$ are said to be \emph{equivalent} if there is an automorphism $\alpha\in\aut(G)$ such that for each vertex $v\in E(G)$ we have $c_1 (v)=c_2 (\alpha (v))$. Ahmadi, Alinaghipour and Shekarriz in~\cite{ahmadi2020number} defined several indices to count non-equivalent distinguishing vertex coloring for connected graphs. More specifically, they defined $\Phi_k (G)$ as the number of non-equivalent distinguishing coloring of the graph $G$ with $\{1,\ldots,k\}$ as the set of colors and $\varphi_k (G)$ as the number of non-equivalent $k$-distinguishing coloring of the graph $G$ with $\{1,\ldots,k\}$ as the set of colors. They showed that these indices are related as follows:
	\begin{equation}
		\Phi_k(G) = \sum_{i=D(G)}^k {k\choose i} \varphi_i(G).
	\end{equation}
	As an application, they used $\Phi_k (G)$ to exactly calculate $D(G\circ H)$, the distinguishing number of the lexicographic product of two connected graph $G$ and $H$. Moreover, Shekarriz et al.~\cite{Shekarriz2021-vsum} used this index to calculate the distinguishing number of some graph operations such as the vertex-sum, the corona product and the smooth rooted product. Anyhow, the indices $\Phi_k (G)$ and $\varphi_k (G)$ showed to be not easy ones to calculate in general, as to date explicit formulae are available only for paths, cycles, complete graphs and complete bipartaite graphs in~\cite{ahmadi2020number} and just recently for grids (the Cartesian product of paths) by Alikhani and Shekarriz~\cite{Shekarriz2021Cartesian}. 
	
	However, sometimes $\varphi_k (G)$ is very easy to calculate, eg. when we know that every $k$-coloring of $G$ has to be distinguishing. Ahmadi, Alinaghipour and Shekarriz~\cite{ahmadi2020number} defined another index, $\theta(G)$, as the least number of color $k$ such that every $k$-coloring is distinguishing. Then, it can be easily shown that whenever $k\geq\theta(G)$ we have
	\begin{equation}
		\varphi_k (G)=\frac{k! \stirling{n}{k}}{|\aut(G)|},
	\end{equation}
	where $\stirling{n}{k}$ denotes the Stirling number of the second kind. Obviously, $$D(G)\leq \theta(G)\leq |V(G)|.$$ Moreover, they also showed that $\theta(K_n)=\theta(\overline{K_n})=n$, $\theta(K_{m,n})=m+n$, $\theta(P_n)=\lceil\frac{n}{2}\rceil+1$, for $n\geq 2$, $\theta(C_n)=\lfloor\frac{n}{2}\rfloor+2$, for $n\geq 3$, and for $n\geq 5$, they have proved that $\theta(K(n,2))=\frac{1}{2}(n^2-3n+6)$ where $K(n,k)$ is the Kneser graph~\cite{ahmadi2020number}. 
	
	Shekarriz et al.~\cite{Shekarriz2021-theta} continued studying the distinguishing threshold by showing that $\theta(G)$ is related to the cycle structure of the automorphism group of $G$ and calculating the distinguishing threshold for all graphs in the Johnson scheme. The distinguishing threshold for the Cartesian product of connected graphs has been studied by Alikhani and Shekarriz~\cite{Shekarriz2021Cartesian}, who showed that when $G=G_1\square G_2 \square \ldots \square G_k$ is a prime factorization to mutually non-isomorphic connected graphs, we have 
	\[
	\theta (G)=\max \left\{ \left(\theta(G_i)-1\right)\cdot \frac{\vert G\vert}{\vert G_i \vert} \; : \; i=1,\dots,k \right\}+1.
	\]
	Furthermore, they showed that when $G$ is a connected prime graph and $k\geq 2$ is a positive integer, we have 
	\[
	\theta (G^k)=\vert G\vert^{k-1}\cdot\max\left\{ \frac{\vert G\vert +1}{2}, \left(\theta(G)-1\right)\right\}+1,
	\]
	where $G^k$ denotes the $k$th Cartesian power of $G$.
	
	Some other graph products were considered by Shekarriz et al.~\cite{Shekarriz2021-vsum} for their distinguishing thresholds. They studied $\theta(G)$ when $G$ is the vertex-sum of some graphs or $G$ is the smooth rooted product, the corona product or the lexicographic product of two graphs.
	
	This is almost evident that for a graph $G$, the indices $\Phi_k (G)$, $\varphi_k (G)$ and $\theta (G)$ can have their alternatives for edge coloring. Here in this paper, we extend the notion of distinguishing threshold to edge coloring and leave the other two for future considerations. The \emph{edge-distinguishing threshold} of $G$, denoted by $\theta' (G)$ is the least integer $k$ such that any edge coloring of $G$ with $k$ colors is distinguishing. Evidently, if symmetries of $G$ can be broken by edge coloring, we have $\theta'(G)\leq \vert E(G) \vert$.
	
	The (edge-)distinguishing threshold must be considered important in the context of breaking symmetries of graphs because it shows the minimum $k$ such that every $k$-(edge-) coloring is distinguishing. For less colors, if there are some such colorings, we have to check whether a coloring is actually distinguishing or not. But when there is no limitations on the number of colors, we can choose $k$ bigger than or equal to the (edge-)distinguishing threshold and be sure that our arbitrary (edge-)coloring is distinguishing.
	
	Our graphs here are finite, simple and connected unless otherwise stated. Undefined terms and notations might be found in~\cite{Chartrand2016graphs-and-digraphs} by Chartrand, Lesniak and Zhang.
	
	\section{Preliminaries}
	
	In this section we remind some important results about the distinguishing threshold. Afterwards, we present a brief description of a Lemma by Alikhani and Shekarriz~\cite[Lemma 3.1]{Shekarriz2021Cartesian}, which we need in Section~\ref{sec:Cartesian}. They used it only for vertex coloring but it also works for edge and total colorings. To state the lemma, we need to recall some definitions. Moreover, we remind some results about line graphs.
	
	\subsection{The distinguishing threshold}
	
	Graphs automorphisms can be represented by product of cyclic permutations. For an automorphism $\alpha$ and a vertex $v\in V(G)$, the ordered tuple  $\sigma=(v, \alpha(v), \alpha^2 (v),\ldots,\alpha^{r-1} (v))$ is a cycle of length $r$ if $r$ is the least integer such that $\alpha^{r}(v)=v$. The number of cycles of an automorphism $\alpha$ is shown by $\vert \alpha \vert$~\cite{Shekarriz2021-theta}. The distinguishing threshold is highly connected with this number.
	
	\begin{lemma}\label{lem:theta-max-cycle}\textnormal{\cite{Shekarriz2021-theta}}
		For any graph $G$, we have 
		\[\theta(G)=\max\left\{|\alpha|\;:\; \alpha\in \aut(G)\setminus\{\mathrm{id}\} \right\}+1.\]
	\end{lemma}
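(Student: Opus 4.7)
The plan is to prove the two inequalities $\theta(G)\ge M+1$ and $\theta(G)\le M+1$ separately, where I set $M:=\max\{|\alpha|:\alpha\in\aut(G)\setminus\{\id\}\}$. The pivotal observation, from which everything else follows, is this: an automorphism $\alpha$ preserves a vertex coloring $c$ if and only if $c$ is constant on each cycle of $\alpha$. Equivalently, the color classes of $c$ refine the cycle partition of $\alpha$. In particular, any coloring preserved by a non-identity automorphism $\alpha$ uses at most $|\alpha|$ distinct colors.

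For the lower bound $\theta(G)\ge M+1$, I would exhibit a non-distinguishing coloring that uses exactly $M$ colors. Choose a non-identity automorphism $\alpha_0$ attaining $|\alpha_0|=M$, enumerate its cycles as $C_1,\dots,C_M$, and set $c(v):=i$ whenever $v\in C_i$. The coloring $c$ is surjective onto $\{1,\dots,M\}$, and by construction $c(\alpha_0(v))=c(v)$ for every $v$, so $\alpha_0$ preserves $c$. Hence $c$ is a non-distinguishing $M$-coloring, which forces $\theta(G)>M$.

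For the upper bound $\theta(G)\le M+1$, I would argue by contradiction. Suppose some $(M+1)$-coloring $c$ (using all of $\{1,\dots,M+1\}$) fails to be distinguishing; then there is a non-identity $\alpha\in\aut(G)$ with $c\circ\alpha=c$. By the pivotal observation, the image of $c$ has cardinality at most $|\alpha|\le M$, contradicting the fact that $c$ attains $M+1$ distinct values. Thus every $(M+1)$-coloring is distinguishing and $\theta(G)\le M+1$, which combined with the previous step yields $\theta(G)=M+1$.

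There is no serious obstacle here: the entire argument reduces to the single combinatorial equivalence "$\alpha$ preserves $c$ iff $c$ is constant on each cycle of $\alpha$", together with a one-line pigeonhole count. The only point requiring a modest amount of care is the interpretation of "$k$-coloring" — whether surjectivity onto $\{1,\dots,k\}$ is imposed — but since a non-surjective $k$-coloring with image of size $j$ is essentially a surjective $j$-coloring, both conventions produce the same threshold value and are handled by the same proof.
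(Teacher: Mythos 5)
Your proof is correct and follows essentially the same route the paper itself takes for the edge analogue (Lemma~\ref{max-lem}): a non-distinguishing $M$-coloring built by coloring each cycle of a maximizing automorphism monochromatically for the lower bound, and a pigeonhole argument on the cycles of an arbitrary non-identity automorphism for the upper bound (the vertex version in question is only cited from the reference, not proved in this paper). One small caveat: your closing remark that the surjective and non-surjective conventions ``produce the same threshold value'' is not right --- under the non-surjective reading a constant coloring is a $k$-coloring for every $k$, so no finite $k$ would work for any graph with symmetry --- but your actual argument correctly works with colorings that use all $k$ colors, which is the convention intended by the definition of $\theta$.
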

	
	Using this lemma, one can show that $\theta(G)=2$ if and only if $\vert G\vert=2$ and $\theta(G)=3$ only if $\vert G \vert=3$ or $G$ is a graph with some certain conditions of order $2p$ for a prime number $p$. More explicitly, since we need it in Section~\ref{sec:edt}, we have the following result.

	\begin{theorem}\label{theta3}\textnormal{\cite{Shekarriz2021-theta}}
		Let $G$ be a graph on $n$ vertices for which we have $\theta(G)=3$. Then, either
		\begin{itemize}
			\item[(a)] $n=3$, or
			\item[(b)] $n=2p$ where $p\neq 3,5$ is a prime number, $G$ is a connected bi-regular graph with the degree-partitions $\mathcal{A}_1 =\{v_1,\ldots,v_p\}$ and $\mathcal{A}_2 =\{u_1,\ldots,u_p \}$, and the induced subgraphs $G[\mathcal{A}_1]$ and $G[\mathcal{A}_2]$ are non-isomorphic circulant graphs.
		\end{itemize}
	\end{theorem}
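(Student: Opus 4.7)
The plan is to invoke Lemma~\ref{lem:theta-max-cycle}, which translates the hypothesis $\theta(G)=3$ into the combinatorial statement that every non-identity $\alpha\in\aut(G)$ has at most two disjoint cycles, with equality attained. First I would fix a non-identity $\alpha_0$ realising this maximum, with cycle lengths $r_1\leq r_2$ summing to $n$, and apply the bound to every non-trivial power:
\[
\gcd(k,r_1)+\gcd(k,r_2)\leq 2 \quad\text{whenever } \alpha_0^k\neq \mathrm{id}.
\]
If $r_1\geq 2$ and $r_1<r_2$, then $\alpha_0^{r_1}$ is not the identity and has $r_1+\gcd(r_1,r_2)\geq 3$ cycles, a contradiction; so either $r_1=1$ or $r_1=r_2$. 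Plugging back in, primality of $r_2$ (respectively of $r_1=r_2$) is forced by applying the bound to every $k\in\{1,\dots,r_2-1\}$. Thus we land in one of two configurations: (i) $(r_1,r_2)=(1,q)$ for a prime $q$, or (ii) $(r_1,r_2)=(p,p)$ for a prime $p$.

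In case (i), connectedness forces the fixed vertex $v$ of $\alpha_0$ to be universal, so $G=K_1+H$ with $H=G-v$ admitting $\alpha_0|_H$ as a $q$-cycle; hence $H$ is a circulant on $q$ vertices. Every automorphism of $H$ lifts to an automorphism of $G$ fixing $v$ and contributing the extra cycle $(v)$, so every non-identity element of $\aut(H)$ must itself be a single $q$-cycle. Cauchy's theorem together with the fact that a Sylow $q$-subgroup of $S_q$ has order $q$ then forces $\aut(H)=\mathbb{Z}_q$. However, for $q\geq 3$ the inversion $x\mapsto -x$ is automatically an automorphism of any non-trivial circulant on $\mathbb{Z}_q$, giving a dihedral subgroup in $\aut(H)$, a contradiction. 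Hence $q=2$, $n=3$, recovering conclusion~(a).

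In case (ii), I would label the two $p$-cycles of $\alpha_0$ as $\mathcal{A}_1=\{v_1,\dots,v_p\}$ and $\mathcal{A}_2=\{u_1,\dots,u_p\}$; because $\alpha_0$ restricts to a $p$-cycle on each part, both $G[\mathcal{A}_1]$ and $G[\mathcal{A}_2]$ admit $\mathbb{Z}_p$ as a subgroup of automorphisms and are therefore circulants. The remaining structural claims (bi-regularity with degree partition $\{\mathcal{A}_1,\mathcal{A}_2\}$ and non-isomorphism of $G[\mathcal{A}_1]$ and $G[\mathcal{A}_2]$) I would derive by contraposition: if either condition failed, then $G$ would admit an automorphism $\beta$ exchanging $\mathcal{A}_1$ and $\mathcal{A}_2$, and a direct calculation on the cycle structure shows that $\beta$ or $\beta^2$ always decomposes into at least three cycles, contradicting the lemma. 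Finally, I would exclude $p=3$ and $p=5$ by going through the finitely many connected bi-regular candidates on $6$ or $10$ vertices whose parts are non-isomorphic circulants preserved by $\mathbb{Z}_p$, and in each one exhibiting a transposition-like automorphism with three or more cycles. The main obstacle is this last structural stage: carefully analysing the $\mathbb{Z}_p$-equivariant bipartite bridge between the two circulant parts, and using it to rule out every residual symmetry of $G$ that would otherwise produce an automorphism with too many cycles, is the most delicate portion of the argument.
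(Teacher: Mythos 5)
First, note that the paper itself gives no proof of Theorem~\ref{theta3}: it is quoted verbatim from \cite{Shekarriz2021-theta}, so your attempt can only be judged on its own merits. The first half of your argument is correct and cleanly executed: Lemma~\ref{lem:theta-max-cycle} turns $\theta(G)=3$ into ``every non-identity automorphism has at most two cycles,'' the power trick $|\alpha_0^k|=\gcd(k,r_1)+\gcd(k,r_2)$ correctly forces the cycle type of a maximal $\alpha_0$ to be $(1,q)$ or $(p,p)$ with $q$, $p$ prime, and your treatment of the $(1,q)$ case (universal fixed vertex, $\aut(H)=\mathbb{Z}_q$ via Cauchy and the Sylow bound in $S_q$, then the inversion $x\mapsto -x$ of the circulant $H$ killing every $q\geq 3$) is a complete and rather elegant derivation of conclusion~(a).

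The gap is in case (ii), and it sits exactly where the real content of the theorem lies. You derive bi-regularity and $G[\mathcal{A}_1]\not\simeq G[\mathcal{A}_2]$ by claiming that if either failed, $G$ would admit an automorphism $\beta$ exchanging $\mathcal{A}_1$ and $\mathcal{A}_2$. That implication is unjustified: an isomorphism between the two induced subgraphs (or equality of the two degrees) says nothing about the $\mathbb{Z}_p$-invariant bipartite graph of cross edges, and in general such an isomorphism does not extend to an automorphism of $G$. Indeed, writing the cross edges as $(i,1)\sim(j,2)\iff j-i\in T$ for a connection set $T\subseteq\mathbb{Z}_p$, one checks that a part-swapping automorphism exists only when $T$ satisfies an identity of the form $T=-\epsilon T+c$; whether this holds is a property of $T$, not of the induced subgraphs, so the contrapositive you propose needs a genuine argument about $T$ (this is presumably where one also shows that isomorphic parts force such a $T$-symmetry, which is the delicate point you yourself flag but do not carry out). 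A secondary issue: even granting a swapping $\beta$, ``$\beta$ or $\beta^2$ has at least three cycles'' is not immediate, since $\beta$ could a priori be a single $2p$-cycle with $|\beta^2|=2$; one must pass to $\beta^p$ to manufacture an involution with $p\geq 3$ transpositions. Finally, connectedness in (b) and the exclusion of $p=3,5$ are asserted but not proved (the latter deferred to an unexecuted finite check). As it stands, the proposal proves conclusion~(a) and the ``$n=2p$ with two circulant $\langle\alpha_0\rangle$-orbits'' part of~(b), but not the bi-regularity, non-isomorphism, or $p\neq 3,5$ clauses.
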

	
	Let $\vert G\vert =n$. Then $\theta(G)=D(G)$ if and only if $G$ is either asymmetric, the complete graph $K_n$ or the empty graph $\overline{K_n}$~\cite{Shekarriz2021-theta}.
	
	\subsection{Automorphisms of the Cartesian product of graphs}

	Automorphisms of the Cartesian products were studied by Imrich (and independently by Miller) as follows. 
	
	\begin{theorem} \textnormal{\cite{HIK2011}} \label{autG}
		Suppose $\psi$ is an automorphism of a connected graph $$G = G_1 \Box G_2 \Box \dots \Box G_k$$ which decomposed into its prime factors. Then for any automorphism $\psi\in\aut (G)$ there is a permutation $\pi$ of the set $\{1, 2, \dots , k\}$ and there are isomorphisms $\psi_i \colon G_{\pi(i)} \longrightarrow G_i$, $i=1, \dots, k$, such that
		$$\psi(x_1, x_2, \dots, x_k) = (\psi_1 (x_{\pi(1)}), \psi_2 (x_{\pi(2)}), \dots, \psi_r (x_{\pi(k)})).$$
	\end{theorem}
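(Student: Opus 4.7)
The plan is to reduce the theorem to the Unique Prime Factorization Theorem (UPFT) for the Cartesian product of connected graphs, originally due independently to Sabidussi and Vizing, and then to extract the coordinate-by-coordinate description by tracking how an automorphism permutes layers.

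First I would recall that for each $i$ the $G_i$-layer through a vertex $v=(v_1,\ldots,v_k)$ is the induced subgraph on $\{v_1\}\times\cdots\times\{v_{i-1}\}\times V(G_i)\times\{v_{i+1}\}\times\cdots\times\{v_k\}$, which is isomorphic to $G_i$, and that two distinct $G_i$-layers are either equal or vertex-disjoint. The key observation is that an automorphism $\psi$, being edge-preserving, must send each $G_i$-layer to a connected subgraph of $G$ whose edges all come from a single Cartesian coordinate direction; otherwise one could extract from the image a nontrivial Cartesian subdecomposition and contradict primality of $G_i$.

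Second, I would invoke UPFT: the images $\{\psi(L) : L \text{ is a } G_i\text{-layer}\}$ for $i=1,\ldots,k$ furnish another system of layers encoding a prime factorization of $G$, and by uniqueness this must be a reindexing of the original one. This pins down a permutation $\pi\in\sym(\{1,\ldots,k\})$ together with isomorphisms $\psi_i\colon G_{\pi(i)}\to G_i$. Writing $\psi$ coordinate-wise and using that the $i$-th coordinate of $\psi(x)$ depends only on which $G_{\pi(i)}$-layer contains $x$ (hence only on $x_{\pi(i)}$), the claimed formula $\psi(x)=(\psi_1(x_{\pi(1)}),\ldots,\psi_k(x_{\pi(k)}))$ drops out.

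The main obstacle is UPFT itself, which is a deep structural theorem; I would take it as a black box from Hammack--Imrich--Klav\v{z}ar. Once UPFT is granted, everything else is bookkeeping: layer preservation produces $\pi$, each restriction of $\psi$ to a single layer yields an isomorphism between the relevant prime factors, and the fact that these restrictions act independently on different coordinates assembles into the product formula. A subtle point to verify is that the choice of $\psi_i$ does not depend on the base vertex $v$ chosen to define the layer, which follows from the connectedness of the factors and the transitivity of moving between layers along the other coordinate directions.
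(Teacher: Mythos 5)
The paper does not prove this statement at all: it is quoted as a known result from Hammack--Imrich--Klav\v{z}ar, so there is no in-paper argument to compare yours against. Your sketch follows the standard textbook route (transport the given prime coordinatization along $\psi$ and appeal to uniqueness of prime factorization), which is the right skeleton, but one step is justified incorrectly. You claim that $\psi$ must send each $G_i$-layer to a subgraph whose edges lie in a single coordinate direction ``otherwise one could extract from the image a nontrivial Cartesian subdecomposition and contradict primality of $G_i$.'' That inference is not valid: a connected induced subgraph isomorphic to a prime graph can perfectly well sit diagonally in a product and use several coordinate directions (an induced $P_3$ in $K_2\Box K_2=C_4$ uses both directions, and $P_3$ is Cartesian-prime). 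So primality of $G_i$ alone proves nothing here, and this first step, as written, is a gap.

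What actually makes the argument work is the strong form of the Sabidussi--Vizing uniqueness theorem that you only invoke in your second step: for a connected graph, any two prime factorizations induce the \emph{same} partition of $E(G)$ into coordinate classes (equivalently, the same system of layers), not merely factors that are unique up to isomorphism and order. Since $\psi$ transports the given coordinatization to another prime coordinatization of the same graph $G$, this strong uniqueness immediately gives that $\psi$ permutes the layer systems, which is where $\pi$ comes from; the flawed first step should be deleted and replaced by this observation. You must also be careful to cite the strong (``same coordinatization'') version rather than the weak one, since the weak version does not by itself rule out an automorphism scrambling the layers. Granting that, the remaining bookkeeping in your third paragraph --- well-definedness of each $\psi_i$ independently of the base vertex, via connectedness and moving between parallel layers --- is correct and matches how the result is proved in the cited reference.
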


	For a vertex $v = (v_1 , v_2 , \ldots , v_k )$ of the product graph $G = G_1 \square G_2 \square \ldots \square G_k$, the \emph{$G_i$-layer through $v$} is the induced subgraph
	$$G_{i}^{v}=G \left[ \{ x \in V (G) \ \vert \  p_{j}(x) = v_j \text{ for }j\neq i\}\right],$$
	
	\noindent where $p_j$ is the projection mapping to the $j^\text{th}$-factor of $G$ \cite{HIK2011}.
	
	The \emph{$i^{\text{th}}$-quotient subgraph of $G$} is the graph
	$$Q_{i}=G \diagup G_{i} \simeq G_1 \square \ldots \square G_{i-1} \square G_{i+1} \square \ldots \square G_k.$$
	It is clear that $G \simeq G_i \square Q_i$ \cite{HIK2011}.

	\subsection{Holographic coloring}
	
	In this section, we briefly recall~\cite[Lemma 3.1]{Shekarriz2021Cartesian} (which originally in its incomplete form was from~\cite{ola2018} by Gorzkowska and Shekarriz) and its required definitions. It is stated in Lemma~\ref{AUL2} bellow and will be used in Section~\ref{sec:Cartesian}.  
	
	Suppose that $G=G_1 \square G_2 \square \ldots \square G_k$ for some $k\geq 2$ forms a prime decomposition, and $f$ is a total coloring for $G$ (if $f$ is a vertex or an edge coloring, by giving a color to all its edges or vertices we can easily change it to a total coloring). Fix an ordering on vertices of each factor graph so that $V(G_{i})=\{ 1_{i},\ldots, m_i \}$. For each $j=1_i,\ldots,m_i$, let $u_j = ( 1_{1},1_{2}, \ldots, 1_{i-1}, j, 1_{i+1},\ldots, 1_{k} )$.
	
	For $\alpha\in \aut(Q_{i})$ the map $\varphi_{\alpha}:Q_i^{u_j}\longrightarrow Q_i^{u_t}$ is defined, using $\alpha$ and our fixed ordering of $Q_i$, so that $\varphi_{\alpha}$ maps the vertex of $Q_i^{u_j}$ with the same ordering as $x\in Q_i$ onto the vertex of $Q_i^{u_t}$ with the same ordering as $\alpha(x)$. Obviously, $\varphi_\alpha$ is an isomorphism and is called a \emph{lifting of $\alpha$}. 
	
	\emph{The (total) coloring of $Q_i^{u_j}$ (induced by $f$)}, denoted by $\check{Q}_{i}^{u_j}$, is the graph $Q_{i}^{u_j}$ together with the (total) coloring induced by $f$. For $\alpha\in\Aut(Q_{i})$, the colored graph $\check{Q}_{i}^{u_j}$ is \emph{$\alpha$-equivalent} to $\check{Q}_{i}^{u_t}$ if there is a (total) color-preserving isomorphism $\varphi: Q_{i}^{u_j}\longrightarrow Q_{i}^{u_t}$ which is a lifting of $\alpha$ or $\alpha^{-1}$.
	
	Let $e=u_{i}v_i$ be an edge of $G_i$. Then $\overline{Q}_i^e$ is a vertex-colored graph isomorphic to $Q_i$ whose vertex set consists of edges of $G$ of the form $(u_i, x)(v_i, x)$ for some $x\in V(Q_{i})$. Two vertices $(u_i, x)(v_i, x)$ and $(u_i, y)(v_i, y)$ are adjacent in $\overline{Q}_i^e$ if $x$ is adjacent to $y$ in $Q_i$. Color each vertex $(u_i, x)(v_i, x)$ of $\overline{Q}_i^e$ by $f((u_i, x)(v_i, x))$ and denote the resulting vertex-coloring by $\hat{\overline{Q}}_i^e$. Similarly, the colored graph $\hat{\overline{Q}}_i^e$ is \emph{$\alpha$-equivalent} to $\hat{\overline{Q}}_i^{e'}$ if there is a vertex-color-preserving isomorphism $\vartheta:\overline{Q}_i^e \longrightarrow \overline{Q}_i^{e'}$ which is a lifting of $\alpha$ or $\alpha^{-1}$.
	
	For a (total) coloring $f$ of $G$, color each vertex $u_j$ by $\check{Q}_{i}^{u_j}$ and color each edge $e=u_j u_{\ell}$ by $\hat{\overline{Q}}_i^{e}$. This total coloring of $G_i$ is called \emph{holographic total coloring of $G_i$ induced by $f$} and is denoted $G_i^f$. Similarly, $G_i^f$ is \emph{equivalent to} $G_j^f$ if there is a total-color-preserving isomorphism from one to another. In this case we write $G_i^f \simeq G_j^f$.
	
	Alikhani and Shekarriz proved the following lemma and used it to calculate the distinguishing threshold of the Cartesian product graphs. However, it can also be used for edge and total colorings.

	\begin{lemma}\label{AUL2}\textnormal{\cite{Shekarriz2021Cartesian}} 
		Let $k\geq 2$ and $G=G_1 \square G_2 \square \ldots \square G_k$ be a connected graph decomposed into Cartesian prime factors. A (total) coloring $f$ is a distinguishing coloring for $G$ if and only if for each $i=1,\ldots, k$  we have
		
		\begin{itemize}
			\item[i.] $G_i^f \not\simeq G_j^f$ for all $j=1,\ldots,k$ such that $j\neq i$, and
			\item[ii.] for each $\alpha\in \aut(Q_i)$ and for each non-identity $\beta\in\Aut(G_i)$, there is a vertex $v\in G_i$ or an edge $e\in E(G_i)$ such that either $\check{Q}_i^v$ and $\check{Q}_i^{\beta(v)}$ or $\hat{\overline{Q}}_i^e$ and $\hat{\overline{Q}}_i^{\beta(e)}$ are not $\alpha$-equivalent.
		\end{itemize}
	\end{lemma}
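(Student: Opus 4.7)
The plan is to reduce both directions to Theorem~\ref{autG}, which encodes every $\psi\in\aut(G)$ as a permutation $\pi$ of $\{1,\ldots,k\}$ together with isomorphisms $\psi_i\colon G_{\pi(i)}\to G_i$. Non-identity automorphisms split naturally into two classes: those with $\pi=\id$, for which every $\psi_i\in\aut(G_i)$ and at least one is non-identity, and those with $\pi\neq\id$, which force $G_{\pi(i)}\simeq G_i$ for each $i$ in a non-trivial $\pi$-orbit. The first class should be precluded by condition (ii) and the second by condition (i); the $\alpha$-equivalence machinery for the ``star-data'' $\check{Q}_i^v$ and $\hat{\overline{Q}}_i^e$ is exactly the bookkeeping that makes the translation between ``automorphism of $G$'' and ``factor data'' both precise and color-sensitive.

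For the forward direction I would argue the contrapositive in each part. If (ii) fails, fix $i$, $\alpha\in\aut(Q_i)$ and $\beta\in\aut(G_i)\setminus\{\id\}$ so that every $\check{Q}_i^v$ is $\alpha$-equivalent to $\check{Q}_i^{\beta(v)}$ and every $\hat{\overline{Q}}_i^e$ is $\alpha$-equivalent to $\hat{\overline{Q}}_i^{\beta(e)}$. Because the $\alpha$-equivalences are realized by liftings $\varphi_\alpha$ indexed uniformly by the fixed ordering of $Q_i$, these color-preserving isomorphisms between layers glue into a well-defined bijection of $V(G)$ which, by Theorem~\ref{autG}, is a color-preserving automorphism of $G$ acting as $\beta$ on the $G_i$-coordinate and as $\alpha$ on the $Q_i$-coordinate; this contradicts distinguishability. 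If instead (i) fails and $G_i^f\simeq G_j^f$ via a total-color-preserving isomorphism $h$, then the vertex colors of the holographic colorings deliver consistent $\alpha$-equivalences between $Q_i^{u}$-layers and $Q_j^{h(u)}$-layers, the edge colors deliver the analogous equivalences on the $\overline{Q}$-layers, and combining these with $h$ yields an automorphism of $G$ whose associated permutation swaps $i$ and $j$ and which preserves $f$, again contradicting distinguishability.

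For the converse, assume (i) and (ii) and suppose for contradiction that a non-identity color-preserving $\psi\in\aut(G)$ exists, decomposed via Theorem~\ref{autG} as $(\pi,\psi_1,\ldots,\psi_k)$. If $\pi=\id$, pick $i$ with $\psi_i=\beta\neq\id$; the remaining $\psi_j$ with $j\neq i$ assemble into an automorphism $\alpha$ of $Q_i$, and restricting $\psi$ to each layer shows that $\check{Q}_i^v$ is $\alpha$-equivalent to $\check{Q}_i^{\beta(v)}$ for every $v\in V(G_i)$ and similarly for the edge-stars, violating (ii) for this choice of $\alpha$ and $\beta$. If $\pi\neq\id$, pick $i$ with $\pi(i)=j\neq i$; then the layer-wise action of $\psi$ packages into a total-color-preserving isomorphism $G_i^f\simeq G_j^f$, violating (i).

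The main obstacle will be the translation step in which pieces coming from individual layers are glued back into a single automorphism of $G$ (and vice versa). The definitions of lifting and $\alpha$-equivalence are engineered to make this translation functorial, but one still has to verify that the liftings chosen at distinct base-vertices are mutually compatible, that the vertex-indexed and edge-indexed pieces fit together along the coordinate projections, and that the resulting bijection on $V(G)$ is indeed of the form prescribed by Theorem~\ref{autG}; this indexing bookkeeping is where the real work of the proof lies.
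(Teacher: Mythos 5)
This paper does not actually prove Lemma~\ref{AUL2}: it is imported verbatim from \cite{Shekarriz2021Cartesian} (Lemma~3.1 there), so there is no in-paper proof to measure your argument against. Judged on its own terms, your plan is the natural one and is surely the route the source takes: decompose a putative color-preserving automorphism $\psi$ via Theorem~\ref{autG} into $(\pi,\psi_1,\ldots,\psi_k)$, let condition (ii) rule out the automorphisms whose factor permutation is trivial while some $\psi_i\neq\id$, and let condition (i) rule out those with $\pi(i)=j\neq i$. The case split and both contrapositive set-ups are correct as far as they go.

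As a proof, however, the proposal stops exactly where the work begins, and there is one concrete gap that you flag but do not close. The definition of $\alpha$-equivalence allows the witnessing color-preserving isomorphism to be a lifting of either $\alpha$ or $\alpha^{-1}$, chosen independently for each vertex $v$ and each edge $e$ of $G_i$. Your gluing step in the forward direction tacitly assumes all witnesses are liftings of the same one of $\alpha$, $\alpha^{-1}$; only then does the family assemble into the product map $(x,y)\mapsto(\beta(x),\alpha(y))$, which is the shape Theorem~\ref{autG} demands of an automorphism of $G$. If the choices alternate between $\alpha$ and $\alpha^{-1}$ across layers, neither $\beta\times\alpha$ nor $\beta\times\alpha^{-1}$ need preserve $f$, so you must either show the choices can be made coherently (for instance along the cycles of $\beta$) or run the argument with a fixed choice of lifting throughout. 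The same compatibility question arises when the vertex-indexed data $\check{Q}_i^v$ must be matched with the edge-indexed data $\hat{\overline{Q}}_i^e$, and again when a color-preserving isomorphism $G_i^f\simeq G_j^f$ is unpacked into a factor-swapping automorphism. The paper itself signals that this is where the subtlety lives --- it remarks that the lemma ``originally in its incomplete form was from'' \cite{ola2018} --- so this bookkeeping is the substance of the proof rather than a formality, and until it is carried out the proposal remains a plausible outline rather than a proof.
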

	
	\subsection{Line graphs}
	Recall that the line graph of a graph $G$ is the graph $L(G)$ with $E(G)$ as its vertices, and two edges of $G$ are adjacent in $L(G)$ if and only if they are incident in $G$~\cite{Chartrand2016graphs-and-digraphs}. It is very well known result by Whitney~\cite{Whitney1932}, which (in modern terminology) can also be found in~\cite[Theorem 6.25]{Chartrand2016graphs-and-digraphs}, that for connected graphs $G_1$ and $G_2$ if $L(G_1)\simeq L(G_2)$ then $G_1 \simeq G_2$ unless $G_1 \simeq K_3$ and $G_2\simeq K_{1,3}$. It is easy to see that an edge coloring of $G$ is distinguishing if and only if the corresponding vertex coloring of $L(G)$ is distinguishing. Therefore, we have $D'(G)=D(L(G))$ and more specifically here, $\theta'(G)=\theta(L(G))$.
	
	In 1970 Beineke~\cite{BEINEKE1970} proved a theorem which characterizes all line graphs as graphs which forbids nine induced subgraphs. The most important of such subgraphs is $K_{1,3}$, i.~e., every line graph is claw-free.

	\section{Edge-distinguishing threshold}\label{sec:edt}

	Given an edge-coloring $c$, the \emph{palette} at a vertex $x$ is the set of colors of its incident edges.	Clearly, if every vertex has different palette from others, then $c$ is a distinguishing coloring~\cite{Kalinowski2015Edge}. Moreover, two edge colorings $c_1$ and $c_2$ of a graph $G$ are said to be \emph{equivalent} if there is an automorphism $\alpha\in\aut(G)$ such that for each edge $e=uv\in E(G)$ we have $c_1 (e)=c_2 (\alpha (e))$.
	
	Note that when $G$ is a connected simple graph and there is a non-identity automorphism of $G$ that fixes all the edges of $G$, we have $G\simeq K_2$. Furthermore, if $G$ is neither $K_2$ nor a disconnected graph with a connected component isomorphic to $K_2$ or having at least two isolated vertices, then $G$ has an edge-distinguishing coloring. 
	
	In the following theorem, we calculate the edge-distinguishing threshold for some well-known classes of graphs. We must only note that $\theta'(G)=1$ if and only if $G$ is asymmetric, and that $\theta'(G)\leq \vert E(G)\vert$ for all graphs whose symmetries can be broken by edge coloring.
	
	\begin{theorem}\label{TH:theta'-simple}
		For $m\geq 2$ and $n\geq 3$ we have
		\begin{itemize}
			\item[1.] $\theta' (P_n)=\left\lfloor \frac{n}{2}\right\rfloor +1$,
			\item[2.] $\theta' (C_n)=\left\lfloor \frac{n}{2}\right\rfloor +2$,
			\item[3.] $\theta' (K_3)= 3$ and $\theta' (K_n)=\frac{(n-1)(n-2)}{2}+2$ when $n\geq 4$,
			\item[4.] $\theta' (K_{m,n})=mn-m+1$, and
			\item[5.] $\theta' (K_{1,m})=m$.			
		\end{itemize}
	\end{theorem}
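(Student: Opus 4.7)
The plan is to use Lemma~\ref{lem:theta-max-cycle} via the identity $\theta'(G)=\theta(L(G))$ established at the end of Section~2. Since $\aut(L(G))\cong \aut(G)$ for each graph in the statement (Whitney's theorem, the $K_3/K_{1,3}$ ambiguity being inconsequential for the groups at hand), we obtain
\[
\theta'(G)=\max\bigl\{\,|\alpha|_E:\alpha\in\aut(G)\setminus\{\id\}\,\bigr\}+1,
\]
where $|\alpha|_E$ denotes the number of cycles of the permutation induced by $\alpha$ on $E(G)$. The proof then reduces, for each family, to locating a non-identity automorphism maximizing $|\alpha|_E$ and verifying optimality.

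For $P_n$ the only non-identity automorphism is the reflection $i\mapsto n+1-i$. When $n$ is even it fixes the middle edge and pairs the remaining $n-2$ edges, producing $1+(n-2)/2=n/2$ cycles; when $n$ is odd it fixes no edge and yields $(n-1)/2$ cycles. In both cases the count is $\lfloor n/2\rfloor$. For $C_n$, $\aut(C_n)\cong D_n$: a non-trivial rotation through $2\pi k/n$ has exactly $\gcd(n,k)$ edge-cycles, hence at most $n/2$; a reflection through a vertex (for $n$ odd) fixes one edge and pairs the rest, giving $(n+1)/2$ cycles, while a reflection through an edge-midpoint (for $n$ even) fixes two edges and pairs the rest, giving $n/2+1$ cycles. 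In both parities the maximum is $\lfloor n/2\rfloor+1$. Finally, $L(K_{1,m})\simeq K_m$, so $\theta'(K_{1,m})=\theta(K_m)=m$ by the formula recalled in the introduction.

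For $K_n$ with $n\ge 4$, the candidate maximizer is a transposition $(ij)\in S_n$: it fixes $\{i,j\}$, fixes the $\binom{n-2}{2}$ edges disjoint from $\{i,j\}$, and pairs each $\{i,k\}$ with $\{j,k\}$ for $k\notin\{i,j\}$, contributing
\[
1+\binom{n-2}{2}+(n-2)=\binom{n-1}{2}+1
\]
edge-cycles. The $K_3$ case is settled by direct inspection of $S_3$ (any transposition yields $2$ edge-cycles). For $K_{m,n}$ with $m\le n$, a transposition in the part of size $n$ fixes $m(n-2)$ edges and groups the remaining $2m$ edges into $m$ swap-orbits, giving $m(n-1)$ edge-cycles.

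The main obstacle is verifying optimality of these candidates in items~(3) and~(4). For $K_n$, I would parametrize $\alpha\in S_n$ by its vertex-cycle type $\lambda=(\lambda_1,\ldots,\lambda_t)$ and invoke the decomposition
\[
|\alpha|_E=\sum_{i=1}^{t}\lfloor\lambda_i/2\rfloor+\sum_{1\le i<j\le t}\gcd(\lambda_i,\lambda_j),
\]
where the first sum counts edge-orbits inside a single vertex-cycle (a cycle of length $r$ producing $\lfloor r/2\rfloor$ orbits) and the second counts edge-orbits between pairs of vertex-cycles (lengths $r,s$ yielding $rs/\mathrm{lcm}(r,s)=\gcd(r,s)$ orbits). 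A routine termwise comparison shows that merging two fixed points into a transposition strictly increases the sum and that any cycle of length $\ge 3$ can be replaced to weakly improve the count, so the maximum over non-identity types is attained at $(2,1,\ldots,1)$. For $K_{m,n}$, the same decomposition applies to each factor of $\aut(K_{m,n})=S_m\times S_n$ (plus the part-swap involution when $m=n$, which contributes only $n+\binom{n}{2}<n(n-1)$ edge-cycles for $n\ge 3$), and an analogous termwise argument isolates the transposition in the larger part as the maximizer. Adding $1$ to each of these counts yields the five displayed formulas.
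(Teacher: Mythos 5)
Your overall strategy --- reduce everything to counting edge-cycles of automorphisms and then maximize over cycle types --- is essentially the content of the paper's Lemma~\ref{max-lem}, and your cycle counts for $P_n$, $C_n$, $K_n$, $K_{m,n}$ and $K_{1,m}$ are all arithmetically correct. The problem is the route by which you justify the identity $\theta'(G)=\max\{|\alpha|_E:\alpha\in\aut(G)\setminus\{\id\}\}+1$. You derive it from $\theta'(G)=\theta(L(G))$ together with the claim that $\aut(L(G))\cong\aut(G)$ ``for each graph in the statement.'' That claim is false for $K_4$, which is in the statement: $L(K_4)$ is the octahedron $K_{2,2,2}$, whose automorphism group has order $48$, while $|\aut(K_4)|=24$. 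The extra automorphisms matter here: swapping the two vertices of $L(K_4)$ that correspond to a perfect matching $\{1,2\},\{3,4\}$ of $K_4$ is an automorphism of $L(K_4)$ with five vertex-cycles, so $\theta(L(K_4))=6$ by Lemma~\ref{lem:theta-max-cycle}, whereas $\theta'(K_4)=5$. Hence the intermediate identity $\theta'(K_4)=\theta(L(K_4))$ is itself false (the paper's unqualified assertion of it in the preliminaries is an overstatement, but the paper never uses it for $K_n$), and your chain of equalities, followed literally, would return $6$ rather than $5$ for $K_4$. You land on the correct value only because you silently replace $\aut(L(G))$ by the image of $\aut(G)$ --- exactly the step that fails for $K_4$.

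The repair is to prove the edge-cycle formula directly, without line graphs: if $t$ is the maximal number of edge-cycles of a non-identity automorphism, then coloring each edge-cycle of a maximizing automorphism monochromatically gives a non-distinguishing $t$-coloring, and with $t+1$ colors the pigeonhole principle forces two colors on some edge-cycle of every non-identity automorphism. This is precisely how the paper later proves Lemma~\ref{max-lem}, and with it your computations go through. Two further remarks. First, the paper's own proof of items 3 and 4 avoids the optimization over cycle types entirely: it exhibits the explicit non-distinguishing coloring for the lower bound and uses a palette/pigeonhole argument (with $\binom{n-1}{2}+2$ colors, every two vertices have distinct palettes) for the upper bound, which is shorter than your classification of maximizing cycle types. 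Second, in your optimality sketch the sentence ``merging two fixed points into a transposition strictly increases the sum'' is backwards: merging weakly \emph{decreases} $|\alpha|_E$ (the identity has the most edge-orbits of all); the correct statement is that among \emph{non-identity} permutations the count is maximized by the type with a single $2$-cycle, which is what the rest of your argument needs.
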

	\begin{proof}
		\begin{itemize}
			\item[1.] The only non-identity automorphism of $P_n$ is a reflection which induce $\left\lfloor \frac{n}{2}\right\rfloor$ orbits on the edges. Therefore, every edge coloring with $\left\lfloor \frac{n}{2}\right\rfloor+1$ colors has to be distinguishing. On the other hand, there is a non-distinguishing coloring with $\left\lfloor \frac{n}{2}\right\rfloor$ colors which uses all these colors on the first $\left\lfloor \frac{n}{2}\right\rfloor$ edges. Consequently, the result follows.
			
			\item[2.] Since $C_n$ is isomorphic to its own line graph $L(C_n)$, we have $\theta' (C_n)=\theta (C_n)$. Now, the result follows from Theorem 3.3 of~\cite{ahmadi2020number}.
			
			\item[3.]  It can be checked directly (or via item 2 because $K_3\simeq C_3$) that $\theta' (K_3)= 3$. Suppose that $n\geq 4$ and choose two vertices $u,v\in K_n$. Color edges of $K_n$ with $\frac{(n-1)(n-2)}{2}$ colors so that every edges of $K_n$ but those that are incident with $v$ receive a distinct color. For a vertex $x$ that is neither $u$ nor $v$, color the edge $vx$ the same color as what is given to $ux$. Use a new color on the edge $uv$. The resulting $\frac{(n-1)(n-2)}{2}+1$ coloring is not distinguishing because the transposition of $u$ and $v$ is a non-identity automorphism that keeps this coloring. 
			
			If we have used $k\geq\frac{(n-1)(n-2)}{2}+2$ colors on the edges of $K_n$, then by the pigeonhole principle, for each pair of vertices $u$ and $v$, the pallet of colors used on the edges incident to $u$ has a color that is not presented in the pallet of $v$. Hence, no vertex can be mapped onto another by a color preserving automorphism, and consequently this coloring has to be distinguishing.
			
			\item[4.] Suppose that $\mathcal{A}$ and $\mathcal{B}$ are the bipartition of $K_{m,n}$ of sizes $m$ and $n$ respectively. There is a non-identity automorphism of $K_{m,n}$ which transposes two vertices $v_1$ and $v_2$ of $\mathcal{B}$ and acts trivially on other vertices. Let $c$ be an edge coloring  with $mn-m$ colors which gives all the edges but those that are incident to $v_1$ a different color and for each $x\in\mathcal{A}$, we have $c(x v_1)=c(x v_2)$. Then $c$ is a non-distinguishing edge coloring. Therefore, we have $\theta'(K_{m,n})\geq mn-m+1$.
			
			On the other hand, if we color edges of $K_{m,n}$ by $mn-m+1$ colors, then by the pigeonhole principle and the fact that $n\geq 3$, the pallet of each vertex is different from others, so it cannot be mapped on another by an edge color preserving automorphism. Consequently, $\theta'(K_{m,n})\leq mn-m+1$.
			
			\item[5.] It is evident if two edges of $K_{1,m}$ have the same color, then there are an automorphism which transpose these two edges. Consequently, $\theta'(K_{1,m})>m-1$ and the result follows.
		\end{itemize}
	\end{proof}

	For simplicity, we make use of some notations. Suppose that $\alpha\in\Aut(G)$ and $e=uv\in E(G)$. Then, as a result of the faithful action of $\aut(G)$ on the edge set $E(G)$, we can think of $\alpha(e)$ as the edge $\alpha(u)\alpha(v)$. Using this action, every automorphism decomposes into cycles of edges. Similar to Shekarriz et al.~\cite{Shekarriz2021-theta}, we represent an automorphism of $G$  by a product of cyclic permutations of edges  where this representation is unique up to a permutation of cycles. So, the ordered tuple  $\sigma=(e, \alpha(e), \alpha^2 (e),\ldots,\alpha^{r-1} (e))$ forms a cycle of length $r$ provided that $r$ is the least integer such that $\alpha^{r}(e)=e$. The number of edge cycles of $\alpha$ is shown by $\vert \alpha \vert_e$. Then the following lemma is an alternative to Lemma~\ref{lem:theta-max-cycle}.
	
	\begin{lemma}\label{max-lem}
		Let $G$ be a graph whose symmetries can be broken by edge coloring. Then $$\theta' (G)=\max \left\{\vert \alpha \vert_e \; :\; \alpha\in\aut(G)\setminus \{\id\}\right\}+1.$$
	\end{lemma}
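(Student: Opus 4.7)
The plan is to mirror the proof of Lemma~\ref{lem:theta-max-cycle} but to apply the argument to the action of $\aut(G)$ on $E(G)$ rather than on $V(G)$. Write $M=\max\{\vert\alpha\vert_e:\alpha\in\aut(G)\setminus\{\id\}\}$; this is finite because $E(G)$ is finite, and the hypothesis that the symmetries of $G$ can be broken by edge coloring guarantees that no non-identity automorphism fixes every edge (so $M$ is a genuine maximum of a non-empty set of integers, as discussed just before the statement of the lemma). I would establish the claimed equality via the two inequalities $\theta'(G)\le M+1$ and $\theta'(G)>M$.

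For the upper bound I would take an arbitrary edge coloring $c$ of $G$ using exactly $M+1$ colors and show that it must be distinguishing. Suppose for contradiction that some non-identity $\alpha\in\aut(G)$ preserves $c$. Along every edge cycle $\sigma=(e,\alpha(e),\alpha^2(e),\ldots,\alpha^{r-1}(e))$ of $\alpha$ the coloring has to be constant, so $c$ uses at most $\vert\alpha\vert_e\le M$ colors, contradicting the choice of $M+1$ colors. Hence every $(M+1)$-edge-coloring of $G$ is distinguishing, which gives $\theta'(G)\le M+1$.

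For the reverse inequality I would exhibit an $M$-edge-coloring of $G$ that is \emph{not} distinguishing. Choose $\alpha^{*}\in\aut(G)\setminus\{\id\}$ attaining $\vert\alpha^{*}\vert_e=M$, list its edge cycles $\sigma_1,\ldots,\sigma_M$, and define $c(e)=i$ whenever $e$ lies on $\sigma_i$. The resulting coloring uses exactly $M$ colors, and by construction it is preserved by $\alpha^{*}$, so it is not distinguishing; therefore $\theta'(G)>M$. Combining the two inequalities then yields $\theta'(G)=M+1$, as required. The main subtlety — the only point at which care is needed — is the convention that a $k$-coloring is one using exactly $k$ colors: the contradiction in the upper bound counts distinct colors used by $c$, and the construction in the lower bound must genuinely realise an $M$-coloring. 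This convention is already built into the definition of $\theta(G)$ used throughout the paper, so the argument transports verbatim from the vertex setting of Lemma~\ref{lem:theta-max-cycle} to the edge setting.
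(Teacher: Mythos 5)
Your proof is correct and takes essentially the same approach as the paper's: the lower bound colors each edge cycle of a maximizing automorphism with a single color to produce a non-distinguishing $M$-coloring, and the upper bound is the same pigeonhole observation (stated in contrapositive form) that a color-preserving non-identity automorphism forces the coloring to be constant on each of its at most $M$ edge cycles. No gaps.
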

	\begin{proof}
		Let $t=\max\left\{|\alpha|_e \;:\; \alpha\in \aut(G)\setminus\{\mathrm{id}\} \right\}$ and $\sigma\in\aut (G)$ such that $\sigma\neq \mathrm{id}$ and $\vert \sigma\vert_e=t$. Color edges of $G$ using $t$ colors so that for $i=1,\ldots, t$, a single color $c_i$ is assigned to the edges of the $i$-th cycle of $\sigma$. This edge coloring is not distinguishing and so we have $\theta (G)\geq t+1$.
		
		Conversely,  let $\alpha$ be a non-identity automorphism of $G$. Because symmetries of $G$ can be broken by edge coloring, there is at least one cycle of $\alpha$ which has more than one edge. Then, since $\vert\alpha\vert_e\leq t$, by the pigeonhole principle any edge coloring of $G$ with $(t+1)$ colors uses at least 2 different colors on the edges of at least one of the cycles of $\alpha$. This means that $\alpha$ is not an edge-color-preserving automorphism. Because $\alpha$ is arbitrary non-identity automorphism, we have $\theta(G)\leq t+1$.
	\end{proof}
	
	Many results for the edge-distinguishing threshold can be retrieved from those of the distinguishing threshold using the line graph arguments. For example, one can deduce that $\theta'(G)=1$ or $\theta'(G)=\infty$ when $G$ is an infinite graph by using Theorem 2.11 of~\cite{Shekarriz2021-theta} and the fact that the line graph of an infinite graph is again an infinite graph. Moreover, by Whitney's theorem~(\cite[Theorem 6.25]{Chartrand2016graphs-and-digraphs}) and a result for the distinguishing threshold~\cite[Theorem 2.10]{Shekarriz2021-theta}, we have $\theta'(G)=D'(G)$ if and only if $G$ is asymmetric, $G\simeq K_{1,n}$ or $G\simeq K_3$. The following result can also be proven using a similar line graph argument, however we present another proof for it here.
	
	\begin{theorem}\label{theta'=2}
		Let $G$ be a connected graph whose edge-distinguishing threshold is 2. Then $G\simeq K_{1,2}$.
	\end{theorem}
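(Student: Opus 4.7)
The plan is to apply Lemma~\ref{max-lem} to translate $\theta'(G)=2$ into an edge-cycle constraint and then classify. By Lemma~\ref{max-lem}, every non-identity $\alpha \in \aut(G)$ satisfies $|\alpha|_e = 1$, so $\alpha$ acts on $E(G)$ as a single cyclic permutation of length $m = |E(G)|$; fix such an $\alpha$.

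First I would pin down $m$. Since $G$ is connected and $m \ge 2$, a short argument shows that any automorphism fixing every edge setwise must fix every vertex, so $\alpha^m = \id$ and the order of $\alpha$ is exactly $m$. For $1 \le k \le m-1$, the power $\alpha^k$ is a non-identity automorphism whose action on the $m$-cycle of edges splits into $\gcd(m,k)$ cycles; the hypothesis $|\alpha^k|_e = 1$ then forces $\gcd(m,k) = 1$, so $m$ is prime. If $m = 2$, the only connected simple graph on two edges is $K_{1,2}$, and we are done.

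Suppose then that $m$ is an odd prime. I would derive a contradiction in two cases, according to whether $\alpha$ fixes a vertex. If some vertex $v$ is fixed, the orbits of $\langle \alpha \rangle$ on the edges incident to $v$ have size dividing $m$; since no edge can lie in a size-one orbit (the unique edge-orbit has size $m \ge 3$), every edge at $v$ sits in the single $m$-orbit, which forces all $m$ edges of $G$ to be incident to $v$. Hence $G \simeq K_{1,m}$, but $\aut(K_{1,m}) = S_m$ contains transpositions of two leaves, and such a transposition has $m-2$ fixed edges together with a single $2$-cycle, so its number of edge-cycles is $m-1 \ge 2$, contradicting the hypothesis.

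If $\alpha$ has no fixed vertex, then every vertex orbit has size $m$, so $|V(G)| = mk$ for some $k \ge 1$, and summing degrees orbit-by-orbit (all vertices in one orbit share a common degree) yields $\sum_{i=1}^{k} d_i = 2$. Connectedness forces each $d_i \ge 1$, leaving either $k = 1$ with a $2$-regular orbit, i.e.\ $G \simeq C_m$, or $k = 2$ with two degree-one orbits, which would make $G$ a perfect matching on $2m$ vertices and hence disconnected. For $G \simeq C_m$ with $m$ odd, any reflection has one fixed edge and swaps the remaining edges in $(m-1)/2$ pairs, giving $(m+1)/2 \ge 2$ edge-cycles, again a contradiction. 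I expect the fixed-vertex case to be the main technical point, specifically the clean argument that a single fixed vertex forces $G$ to be a star; after the prime-$m$ reduction is in hand, disposing of stars and cycles is routine.
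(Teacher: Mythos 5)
Your proof is correct, but it takes a genuinely different route from the paper's. The paper also begins with Lemma~\ref{max-lem}, but then argues structurally: it claims that any vertex of degree greater than $1$ must be fixed by every non-identity automorphism, that no edge can be fixed, and that not every vertex can have degree $1$; from this it concludes that exactly one vertex has degree exceeding $1$, so $G\simeq K_{1,n}$, and it finishes by invoking $\theta'(K_{1,n})=n$ from Theorem~\ref{TH:theta'-simple}. You instead exploit the group action quantitatively: faithfulness of the action on $E(G)$ gives $\alpha$ order exactly $m=\vert E(G)\vert$, the requirement $\vert\alpha^k\vert_e=1$ for all powers forces $m$ to be prime, and an orbit/degree count reduces everything to the star case and the cycle/matching case, each of which you kill by exhibiting an explicit automorphism with at least two edge cycles. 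What your approach buys is rigor at a delicate point: the implication ``$\deg(v)>1$ and $\vert\alpha\vert_e=1$ imply $\alpha(v)=v$'' is not valid for a single automorphism in isolation (a rotation of $C_m$ has a single edge cycle yet moves every degree-$2$ vertex), so the cycle case genuinely needs to be excluded by appealing to a \emph{different} automorphism, which is exactly what your reflection argument does explicitly; the paper's version is shorter but leaves that case to the reader. One small stylistic point: in your fixed-vertex case, the remarks about orbit sizes dividing $m$ are superfluous --- it is enough to say that the set of edges incident to $v$ is a nonempty $\langle\alpha\rangle$-invariant set and the action on $E(G)$ has only one orbit, so that set is all of $E(G)$.
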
	
	\begin{proof}
		Suppose that $\alpha\in \aut(G)$ is a non-identity automorphism. Since $\theta'(G)=2$, then by Lemma \ref{max-lem} we have $\vert \alpha\vert_e =1$. Note that if $\deg (v)>1$ for a vertex $v\in V(G)$, then we must have $\alpha(v)=v$ because else we must have  $\vert \alpha\vert_e \geq 2$ which is not possible. Moreover, if there is an edge fixed by $\alpha$, then again we must have $\vert \alpha\vert_e \geq 2$, a contradiction to our assumption. If every vertex of $G$ has degree 1, then $G$ must have a connected component isomorphic to $K_2$, which cannot happen due to its violation with our implicit assumption that $G$ can be distinguished by edge coloring. Therefore, it can be deduced that the degree of only one vertex of $G$ is bigger than 1.
		
		Consequently, $G$ must be isomorphic to $K_{1,n}$ for some $n\geq 2$. By Theorem \ref{TH:theta'-simple} we know that $\theta'(K_{1,n})=2$ if and only if $n=2$. Hence, $G\simeq K_{1,2}$.
	\end{proof}
	
	Unlike the distinguishing threshold, Theorem~\ref{theta'=2} has an extension to a higher number. 
	We need the following lemma about the distinguishing threshold in to extend Theorem~\ref{theta'=2} to the case when the edge-distinguishing threshold is 3. Just note that $ \left[v, \mathcal{A}\right]$ means the set of edges whose one of their endpoints is on the vertex $v$ and  another belongs to the set $\mathcal{A}$.
	
	\begin{lemma}\label{lem:theta3-deg}
		Let $G$ be a graph such that $\theta(G)=3$ and $\vert G\vert>4$. Then for each $v\in \mathcal{A}_i$ we have $$3\leq \left| \left[v, \mathcal{A}_{i+1}\right]\right|\leq p-3,$$ where the addition in $\mathcal{A}_{i+1}$ is given modulo 2.
	\end{lemma}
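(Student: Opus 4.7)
The plan is to exploit the rigidity forced by $\theta(G)=3$ via Lemma~\ref{lem:theta-max-cycle}, together with the shape of $G$ given by Theorem~\ref{theta3} (where $p$ is the prime with $n=2p$). From $\theta(G)=3$ I obtain a non-identity $\alpha\in\aut(G)$ with $|\alpha|=2$; since each cycle of $\alpha$ lies in an $\aut(G)$-orbit and both orbits $\mathcal{A}_1,\mathcal{A}_2$ have size $p$, the two cycles of $\alpha$ must coincide with $\mathcal{A}_1$ and $\mathcal{A}_2$. Labeling $\mathcal{A}_1=\{v_i\}_{i\in\mathbb{Z}/p}$ and $\mathcal{A}_2=\{u_i\}_{i\in\mathbb{Z}/p}$ so that $\alpha$ acts by $i\mapsto i+1$ on each, the $\alpha$-invariance of $E(G)$ encodes $G[\mathcal{A}_1]$ and $G[\mathcal{A}_2]$ by symmetric connection sets $S_1,S_2\subseteq\mathbb{Z}/p\setminus\{0\}$ and encodes the cross edges by a single set $T\subseteq\mathbb{Z}/p$ via $v_iu_j\in E(G)\iff j-i\in T$. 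Hence $|[v,\mathcal{A}_{i+1}]|=|T|$ for every $v$, and it suffices to prove $3\leq|T|\leq p-3$.

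Assume for contradiction that $|T|\in\{0,1,2,p-2,p-1,p\}$. The case $|T|=0$ disconnects $G$, contradicting connectedness. For the remaining cases the key observation is that whenever $|T|\leq 2$ or $|T^c|\leq 2$ there exists $s\in\mathbb{Z}/p$ such that $s-T=T$: take $s=2a$ if $T=\{a\}$, $s=a+b$ if $T=\{a,b\}$, and the analogous scalars built from $T^c$ in the two complementary subcases; for $|T|=p$ any $s$ trivially works. This works because every one- or two-element subset of $\mathbb{Z}/p$ is symmetric about a unique midpoint.

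With such an $s$ in hand, I would verify that the involution $\beta\colon v_i\mapsto v_{-i},\ u_i\mapsto u_{s-i}$ lies in $\aut(G)$: the internal circulants $G[\mathcal{A}_1]$ and $G[\mathcal{A}_2]$ are preserved because $S_1=-S_1$ and $S_2=-S_2$, while the cross edges are preserved precisely because $s-T=T$. Since $p$ is odd, $\beta$ has one fixed point and $(p-1)/2$ transpositions on each of $\mathcal{A}_1$ and $\mathcal{A}_2$, so $|\beta|=p+1$. Because $p\neq 3,5$ is prime we have $p\geq 7$, hence $|\beta|\geq 8$, which contradicts $\theta(G)=3$ by Lemma~\ref{lem:theta-max-cycle}. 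The desired bounds $3\leq|T|\leq p-3$ follow.

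The main obstacle I anticipate is conceptual rather than computational: recognizing that a small cross-connection set $T$ (or equivalently a small complement $T^c$) is automatically symmetric about some midpoint of $\mathbb{Z}/p$, and that this symmetry forces an extra reflection automorphism whose cycle count far exceeds the ceiling of $2$ imposed by $\theta(G)=3$. Once that observation is made, verifying $\beta\in\aut(G)$ and counting its cycles is entirely routine.
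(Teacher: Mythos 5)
Your proof is correct, and while it rests on the same two pillars as the paper's (Theorem~\ref{theta3} for the structure of $G$ and Lemma~\ref{lem:theta-max-cycle} to turn a reflection with $p+1$ cycles into a contradiction with $\theta(G)=3$), the route is genuinely different in organization. The paper argues by cases on $\left|\left[v,\mathcal{A}_{i+1}\right]\right|\in\{0,1,2\}$ (handling $p,p-1,p-2$ by complementation), in each case invoking the dihedral subgroup of the automorphism group of a circulant to pick a suitable reflection of $G[\mathcal{A}_1]$ and then extending it to $\mathcal{A}_2$; the verification that the extension preserves \emph{all} cross edges is left largely implicit, especially in the two-neighbour case. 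You instead first upgrade the structure: the existence of a non-identity automorphism with exactly two cycles forces those cycles to be $\mathcal{A}_1$ and $\mathcal{A}_2$ acting as $p$-cycles, which lets you encode the cross edges by a single difference set $T\subseteq\mathbb{Z}/p$ with $v_iu_j\in E(G)\iff j-i\in T$. All six bad cases then reduce to the single observation that a subset of $\mathbb{Z}/p$ of size at most $2$ (or with complement of size at most $2$) satisfies $s-T=T$ for some $s$, which makes the cross-edge invariance of your involution $\beta$ an explicit one-line check rather than something to be taken on faith. Your cycle count $|\beta|=2\bigl(1+\tfrac{p-1}{2}\bigr)=p+1\geq 8$ matches the paper's. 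One cosmetic point: you call $\mathcal{A}_1,\mathcal{A}_2$ ``orbits'' of $\aut(G)$; strictly they are the degree classes of the bi-regular graph from Theorem~\ref{theta3}, and that is all your counting argument needs, since each cycle of $\alpha$ consists of vertices of equal degree.
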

	
	\begin{proof}
		Based on Theorem~\ref{theta3}, we have $\vert G\vert =2p$ where $p\neq 3,5$ is a prime number, $G$ is a bi-regular graph on $\mathcal{A}_1 =\{u_1,\ldots , u_p\}$ and $\mathcal{A}_2 =\{v_1,\ldots , v_p\}$ partition vertices of $G$ so that $G[\mathcal{A}_1]$ and $G[\mathcal{A}_2]$ are nonisomorphic circulant graphs.
		
		Suppose that $\sigma\in\aut(G)$ be a non-identity automorphism. Then by choosing one vertex from each partition, say $u\in\mathcal{A}_1$ and $v\in\mathcal{A}_2$, we have an ordering on $\mathcal{A}_1$ and $\mathcal{A}_2$ using $\sigma$ such that $u_0=u, u_1=\sigma(u), u_2=\sigma^2 (u),\ldots, u_{p-1}=\sigma^{p-1}(u)$ and $v_0=v, v_1=\sigma(v), v_2=\sigma^2 (v),\ldots, v_{p-1}=\sigma^{p-1}(v)$.
		
		If $u$ has no neighbor in $\mathcal{A}_2$, then neither do other vertices in $\mathcal{A}_1$, and hence $G$ is disconnected. Since the dihedral group $D_{2p}$ is a subgroup of $\aut(G[\mathcal{A}_1])$ (see~\cite[Remark 2.5]{Shekarriz2021-theta}) and $p\geq 7$, it can be inferred that there is a reflection automorphism on $G[\mathcal{A}_1]$ which has more than 3 cycles. Now, by Lemma~\ref{lem:theta-max-cycle}, we must have $\theta(G)>4$, which is a contradiction.
		
		So suppose that $u$ has exactly one neighbor in $\mathcal{A}_2$. For $x\in\mathcal{A}_1$, denote its only neighbor in $\mathcal{A}_2$ by $f(x)$, and for $v\in\mathcal{A}_2$ denote its only neighbor in $\mathcal{A}_1$ by $b(v)$. Then, again because $D_{2p}$ is a subgroup of $\aut(G[\mathcal{A}_1])$ and $p\geq 7$, there is a reflection automorphism $\alpha\in\aut(G[\mathcal{A}_1])$ which fixes $u$. Therefore, there is automorphism $\beta\in\aut(G)$ such that
		\[\beta(y)=\left\{ \begin{gathered}
			\alpha(y)\hspace{24mm} y\in\mathcal{A}_1 \\
			f\left(\alpha(b(y))\right) \hspace{13mm} y\in\mathcal{A}_2
		\end{gathered}\right. .\]
		It is clear that $\beta$ has at least $2\cdot\lceil\frac{p}{2}\rceil=p+1\geq 8$ vertex-cycles and consequently by Lemma~\ref{lem:theta-max-cycle} we must have $\theta(G)\geq 9$, a contradiction.
		
		Now, suppose that $u$ has exactly two neighbors in $\mathcal{A}_2$, say $f_1 (u)$ and $f_2 (u)$. 
		Since $p$ is an odd prime greater than 5, we can find a reflection automorphism $\gamma_1\in\aut(G[\mathcal{A}_1])$ such that it fixes $u$. Put $v_i=f_1(u)$ and $v_j=f_2(u)$. Without loss of generality, suppose that $j>i$. Then one of the numbers $j-i$ and $p+i-j$ is odd and the other is even. let $k$ be the middle number of the odd one and find the reflection automorphism $\gamma_2\in\aut(G[\mathcal{A}_2])$ that fixes $v_k$ and map $v_i$ and $v_j$ onto each other. Then $\gamma_1\cup\gamma_2$ is an automorphism of $G$ which has at least $2\cdot \lceil\frac{p}{2}\rceil = p+1$ cycles. Therefore, by Lemma~\ref{lem:theta-max-cycle} we must have $\theta(G)\geq p+2\geq 9$ which is a contradiction.
		
		Using nonadjacencies instead of adjacencies and with a similar argument, we can easily prove that $u$ cannot be adjacent to all, all but one and all but two vertices of $\mathcal{A}_2$. Now, the statement follows. 
	\end{proof}

	When $\theta(G)=3$ and $u\in\mathcal{A}_i$, $i=1,2$, the set of neighbors of $u$ outside $\mathcal{A}_i$ is shown by $\mathcal{N}_{out}(u)$. Moreover, a double-star, $DS_n$ is a graph on $n+2$ vertices $1,\ldots,n,n+1,n+2$ and $n+1$ and $n+2$ are adjacent to vertices $1,\ldots,n$. Equipped with these notations and the line graph arguments, we prove the following result which is a little surprising because there are infinitely many graphs whose distinguishing thresholds are 3.
	
	\begin{theorem}\label{theta'=3}
		Let $G$  be a connected graph whose edge-distinguishing threshold is 3. Then $G$ is either isomorphic to $P_4$, $K_{1,3}$ or $K_3$.
	\end{theorem}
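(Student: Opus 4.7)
The plan is to pass to the line graph, using the identity $\theta'(G)=\theta(L(G))$ recorded in the Preliminaries. Setting $H:=L(G)$, the hypothesis becomes $\theta(H)=3$, so by Theorem~\ref{theta3} either $|H|=3$ or $|H|=2p$ for some prime $p\neq 3,5$ with $H$ bi-regular and its two $p$-vertex degree classes $\mathcal{A}_1,\mathcal{A}_2$ inducing non-isomorphic circulants.

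In the first case, $G$ is a connected graph with exactly three edges, hence $G\in\{P_4,K_{1,3},K_3\}$; Theorem~\ref{TH:theta'-simple} verifies that each of these really has $\theta'=3$, and Whitney's exception $L(K_3)\simeq L(K_{1,3})\simeq K_3$ is harmless since both $K_3$ and $K_{1,3}$ appear in the conclusion. The bulk of the work is to rule out the second case, and the strategy is to contradict the fact (Beineke's theorem) that every line graph is claw-free. Concretely, for $p\geq 7$ one applies Lemma~\ref{lem:theta3-deg} to pick a vertex $v\in\mathcal{A}_1$ together with three neighbors of $v$ in $\mathcal{A}_2$, and then uses the vertex-transitive circulant structure of $H[\mathcal{A}_2]$ together with the bound $|[v,\mathcal{A}_2]|\leq p-3$ to refine the choice so that the three selected neighbors are pairwise non-adjacent in $H[\mathcal{A}_2]$; this produces an induced $K_{1,3}$ in $H$, the desired contradiction. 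The residual case $p=2$ (where $|H|=4$ and Lemma~\ref{lem:theta3-deg} does not apply) is handled by direct enumeration of the few connected four-vertex graphs $H$ with $\theta(H)=3$, combined with Whitney's theorem to identify the corresponding $G$.

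The main obstacle is the claw construction in the generic case $p\geq 7$: the cross-part degree $s:=|[v,\mathcal{A}_2]|$ can approach $p-3$, so the opposite circulant $H[\mathcal{A}_2]$ may itself be quite dense, and one has to argue carefully — likely via a pigeonhole/connection-set argument, exploiting the primality of $p$ together with the non-isomorphism of $H[\mathcal{A}_1]$ and $H[\mathcal{A}_2]$ — that an independent triple inside $N(v)\cap\mathcal{A}_2$ is always available, regardless of which circulant connection set $H[\mathcal{A}_2]$ realises.
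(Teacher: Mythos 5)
Your reduction to $H=L(G)$ and your treatment of the case $|H|=3$ match the paper. The divergence, and the gap, is in the case $|H|=2p$. Your plan is to contradict claw-freeness by exhibiting an induced $K_{1,3}$ in $H$ centred at some $v\in\mathcal{A}_1$ together with three pairwise non-adjacent neighbours in $\mathcal{A}_2$; but Lemma~\ref{lem:theta3-deg} only gives $3\le|[v,\mathcal{A}_2]|\le p-3$, and nothing you say rules out that these (possibly only three) cross-neighbours induce a triangle in the circulant $H[\mathcal{A}_2]$, nor, more generally, that $N_H(v)$ has independence number $2$. You flag this yourself as ``the main obstacle'' but do not supply the promised pigeonhole/connection-set argument, so the contradiction is not actually established; note also that the claw is only one of Beineke's nine obstructions, so the (true) fact that such an $H$ is not a line graph need not be witnessed by a claw at all. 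The paper takes a different route that avoids this entirely: working in $G$ rather than in $H$, it uses Lemma~\ref{max-lem} to show that every vertex of degree at least $3$ is fixed by all automorphisms (otherwise it contributes at least $3$ edge cycles), transports Lemma~\ref{lem:theta3-deg} back to $G$ to conclude that every edge meets such a fixed vertex, and then shows $G$ would have to be a double star or a star $K_{1,2p}$, each of which has edge-distinguishing threshold far exceeding $3$.

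Concerning the residual case $p=2$ (i.e.\ $|H|=4$, where Lemma~\ref{lem:theta3-deg} is unavailable): you are right that it is not covered by the generic argument --- the paper silently skips it --- but your proposed ``direct enumeration combined with Whitney's theorem'' cannot close it in the theorem's favour. Indeed $P_4$ is a connected $4$-vertex graph with $\theta(P_4)=3$ that fits Theorem~\ref{theta3}(b) with $p=2$, and $P_4=L(P_5)$, so $\theta'(P_5)=\theta(P_4)=3$; this agrees with Theorem~\ref{TH:theta'-simple}(1), which gives $\theta'(P_5)=\lfloor 5/2\rfloor+1=3$. So the enumeration you propose would produce an additional graph rather than a contradiction, and this case has to be confronted head-on rather than declared handled.
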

	
	\begin{proof}
		Suppose that $G$ is a graph such that $\theta'(G)=3$. Then we must have $\theta(L(G))=3$. Suppose that $H=L(G)$. By Theorem~\ref{theta3} either $\vert H\vert=3$ or $\vert H\vert= 2p$ for a prime number $p\neq 3,5$. If $\vert H \vert=3$, then from the list of all graph on 3 vertices, only $K_3$ and $P_3$ are line graphs. Therefore, $G$ must be either $K_3$ or $K_{1,3}$, the only graphs whose line graphs is $K_3$, or it is $P_4$ whose line graph is $P_3$.
		
		We claim that if a vertex $v\in G$ has degree greater than 2, then $v$ is fixed by all automorphisms of $G$. Suppose that $\alpha\in \aut(G)$ and $v\in G$ such that $\deg_G (v)\geq 3$. If $\alpha$ moves $v$, i.~e., $\alpha(v)\neq v$, then it means that $\alpha$ induces at least 3 cycles on edges of $G$. Therefore, by Lemma~\ref{max-lem} we must have $\theta'(G)\geq 4$, a contradiction. Consequently, $\alpha(v)=v$.
		
		Suppose that $e=uv$ be an edge of $G$ and $\vert H \vert >3$. Since $\theta (H)=3$, Lemma~\ref{lem:theta3-deg} implies that the vertex $e\in V(H)$ has at least 3 neighbors in $\mathcal{N}_{out}(e)$. This translate into $G$ so that $e$ is incident on at least 3 other edges. By the pigeonhole principle, vertex degree of either $u$ or $v$ is greater than 2. Therefore by the the claim above, every edge of $G$ is incident on a fixed vertex. Moreover, $G$ can possess at most 2 fixed vertices because otherwise we have $\vert \sigma \vert_e \geq 3$ for every $\sigma\in\aut (G)$, which violates Lemma~\ref{max-lem}. Other vertices are then of degree 1 or 2.
		
		Suppose that $x$ and $y$ are vertices of $G$ of degree greater than 2. They cannot be adjacent because otherwise the edge $e=\{xy\}$ has to be fixed by all automorphism of $G$, which means that $H$ has to have a fixed vertex. This is wrong by Theorem~\ref{theta3}. Other vertices of $G$ cannot all be of degree 1 because else $G$ must be disconnected. Moreover, since every non-identity automorphism of $G$ induces exactly 2 cycles on $E(G)$, it can be deduced that all vertices other than $x$ and $y$ are of degree 2. Combining this with the fact that every edge of $G$ is incident on a fixed vertex, it is deduced that $G$ is a double star $DS_p$ for $p\geq 7$. The double star has no fixed vertices and besides, it has an automorphism transposing two vertices of degree 2 and fixes other edges, which means that $\theta'(G)\geq 2p-4\geq 10$ by Lemma~\ref{max-lem}. Both of these assertions contradict our assumptions.
		
		If $G$ has only one fixed vertex, namely $x$, then all other vertices are of degree 1 because every edge of $G$ is incident on a fixed vertex. Therefore, every edge is incident on $x$ and therefore $G\simeq K_{1,2p}$ for $p\geq 7$. By Theorem~\ref{TH:theta'-simple}, this implies that $\theta'(G)=2p\geq 14$ which contradicts our very assumption that $\theta'(G)=3$.
		
		Therefore, there is no graph other than  $K_3$, $K_{1,3}$ and $P_4$ whose edge-distinguishing threshold is 3.
	\end{proof}
	
	The consequence of the previous theorem over the distinguishing threshold is given by the following.
	
	\begin{corollary}
		Let $G$ be a graph on more than 3 vertices and $\theta(G)=3$. Then $G$ is not a line graph.
	\end{corollary}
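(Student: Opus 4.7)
The plan is to derive this directly from Theorem~\ref{theta'=3} via the line graph identity $\theta'(H)=\theta(L(H))$ recalled in the Line graphs subsection. Suppose for contradiction that $G$ is a line graph, so $G\simeq L(H)$ for some connected graph $H$. Then
\[
\theta'(H)\;=\;\theta(L(H))\;=\;\theta(G)\;=\;3.
\]

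Now I apply Theorem~\ref{theta'=3} to $H$: the only connected graphs with edge-distinguishing threshold equal to $3$ are $P_4$, $K_{1,3}$ and $K_3$. A case check of their line graphs gives $L(P_4)\simeq P_3$, $L(K_{1,3})\simeq K_3$, and $L(K_3)\simeq K_3$. In every case $|V(L(H))|=3$, whence $|V(G)|=3$, contradicting the hypothesis $|V(G)|>3$. Therefore $G$ cannot be a line graph.

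The only subtlety worth mentioning is the connectedness assumption implicit when invoking Theorem~\ref{theta'=3}: if $G$ is disconnected one reduces to its connected components, each of which is itself a line graph (of the corresponding component of $H$), and the same contradiction is produced by the unique component carrying the nontrivial automorphism that realises $\theta(G)=3$ via Lemma~\ref{lem:theta-max-cycle}. There is no genuine obstacle here; the corollary is essentially a rephrasing of Theorem~\ref{theta'=3} through Whitney's identification between edge colorings of $H$ and vertex colorings of $L(H)$.
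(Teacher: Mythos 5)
Your proposal is correct and is exactly the argument the paper intends: the corollary is stated without proof as an immediate consequence of Theorem~\ref{theta'=3}, obtained by writing $G\simeq L(H)$, using $\theta'(H)=\theta(L(H))=3$, and observing that the line graphs of $P_4$, $K_{1,3}$ and $K_3$ all have only $3$ vertices. Your remark on connectedness is a reasonable (and harmless) extra precaution, though under the paper's standing convention that all graphs are connected it is not needed.
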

	
	There are many cases that the line graph argument cannot help or it is easier to consider the edge coloring directly. For example suppose that $G$ is the Cartesian product of two arbitrary graphs $H_1$ and $H_2$. Then although it is true that $\theta'(G)=\theta(L(G))$, calculating $\theta (L(G))$ can be sometimes more complicated than directly calculating $\theta'(G)$. Therefore, it is worth calculating the edge-distinguishing threshold using direct arguments of edge coloring.

	\section{Total distinguishing threshold}
	
	To imply Lemma~\ref{AUL2} for edge coloring, the resulting holographic coloring is a total one. Therefore, one might ask about the \emph{total-distinguishing threshold} $\theta'' (G)$ which is the minimum integer $k$ such that any $k$-total coloring of $G$ is distinguishing. A complete answer to this question requires another full-length paper, but we can define some alternative definitions here such as total cycles of an automorphism $\alpha$, denoted by $\vert \alpha\vert_t$, and easily show that $\vert \alpha\vert_t=\vert \alpha \vert +\vert \alpha \vert_e$. 
	
	By the pigeonhole principle, any total coloring with $\theta(G)+\theta'(G)-1$ has to induce a distinguishing coloring on vertices or edges of $G$. Consequently, we have $\theta''(G)\leq\theta(G)+\theta'(G)-1$. Additionally, for any graph $G$ we have $\theta''(G)\leq \vert V(G) \vert +\vert E(G)\vert-1$. This bound is sharp because it is attained by the star $K_{1,n}$ for $n\geq 2$.

	\section{Edge-distinguishing threshold for the Cartesian products}\label{sec:Cartesian}
	As noted in the introduction, using Lemma~\ref{AUL2} the authors in~\cite{Shekarriz2021Cartesian} calculated the (vertex) distinguishing threshold for the Cartesian product graphs. However, they stated Lemma~\ref{AUL2} so that it enables us to calculate edge-distinguishing threshold of the Cartesian products.

	\begin{theorem}\label{theta'-cart}
		Let $G=G_1\square G_2 \square \ldots \square G_k$ for $k\geq 2$ be a prime factorization to mutually non-isomorphic connected graphs. Then 
		\begin{equation*}
			\theta'(G)=  \max \left\{ \vert \beta \vert_e\cdot \vert V(Q_i)\vert+\vert \beta \vert\cdot\vert E(Q_i)\vert :  i=1,\dots,k, \beta\in\aut(G_i)\setminus\{\id \} \right\}+1.
		\end{equation*}
	\end{theorem}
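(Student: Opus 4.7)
The plan is to invoke Lemma~\ref{max-lem} and reduce the theorem to computing $M:=\max\{\vert\alpha\vert_e:\alpha\in\aut(G)\setminus\{\id\}\}$, then to show that this maximum is attained by a ``single-factor'' automorphism of the form $(\id,\ldots,\beta,\ldots,\id)$. Because the prime factors are pairwise non-isomorphic, Theorem~\ref{autG} forces the factor permutation $\pi$ to be trivial, so every automorphism has the form $\alpha=(\alpha_1,\ldots,\alpha_k)$ with $\alpha_i\in\aut(G_i)$. The edge set of $G$ partitions into $G_j$-layer edges for $j=1,\ldots,k$; a typical such edge is indexed by a pair $(e,q)\in E(G_j)\times V(Q_j)$, and $\alpha$ sends it to $(\alpha_j(e),\alpha^Q_j(q))$, where $\alpha^Q_j\in\aut(Q_j)$ is the product automorphism obtained from $\alpha$ by dropping its $j$-th component.

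The key combinatorial input is standard: if $\sigma$ and $\tau$ are permutations of $X$ and $Y$ with cycle lengths $(r_a)$ and $(s_b)$, then $\sigma\times\tau$ has exactly $\sum_{a,b}\gcd(r_a,s_b)$ cycles on $X\times Y$; in particular this count is bounded above by $c(\sigma)\cdot\vert Y\vert$ and by $c(\tau)\cdot\vert X\vert$, with equality in the first when $\tau=\id$. Applied layer by layer, this yields both $c_j(\alpha)\leq\vert\alpha_j\vert_e\cdot\vert V(Q_j)\vert$ and $c_j(\alpha)\leq\vert\alpha^Q_j\vert\cdot\vert E(G_j)\vert$, where $c_j(\alpha)$ denotes the number of edge-cycles of $\alpha$ restricted to the $G_j$-layer.

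Now fix any $i^*$ with $\alpha_{i^*}\neq\id$. Use the first bound at layer $i^*$: $c_{i^*}(\alpha)\leq\vert\alpha_{i^*}\vert_e\cdot\vert V(Q_{i^*})\vert$. For each $j\neq i^*$, use the second bound $c_j(\alpha)\leq\vert\alpha^Q_j\vert\cdot\vert E(G_j)\vert$, and then apply the product-cycle bound \emph{inside} $Q_j$ with $\alpha_{i^*}$ playing the role of $\sigma$, yielding $\vert\alpha^Q_j\vert\leq\vert\alpha_{i^*}\vert\cdot\vert V(Q_j)\vert/\vert V(G_{i^*})\vert$. Summing over $j\neq i^*$ and using the identity $\vert V(Q_j)\vert/\vert V(G_{i^*})\vert=\vert V(Q_{i^*})\vert/\vert V(G_j)\vert=\prod_{l\neq i^*,j}\vert V(G_l)\vert$ gives
\[
\sum_{j\neq i^*}c_j(\alpha)\;\leq\;\vert\alpha_{i^*}\vert\cdot\sum_{j\neq i^*}\vert E(G_j)\vert\cdot\frac{\vert V(Q_{i^*})\vert}{\vert V(G_j)\vert}\;=\;\vert\alpha_{i^*}\vert\cdot\vert E(Q_{i^*})\vert,
\]
so $\vert\alpha\vert_e\leq\vert\alpha_{i^*}\vert_e\cdot\vert V(Q_{i^*})\vert+\vert\alpha_{i^*}\vert\cdot\vert E(Q_{i^*})\vert$. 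A direct computation shows that this upper bound is actually achieved, with both layer-wise inequalities saturating, by the single-factor automorphism $\alpha^*:=(\id,\ldots,\alpha_{i^*},\ldots,\id)$ (its omitted components are identities). Hence $M$ equals $\max\{\vert\beta\vert_e\cdot\vert V(Q_i)\vert+\vert\beta\vert\cdot\vert E(Q_i)\vert\}$ over $i$ and non-identity $\beta\in\aut(G_i)$, and the theorem follows from Lemma~\ref{max-lem}.

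The main obstacle is the bookkeeping: one must choose the correct factor to play the role of $\sigma$ at each stage of the product-cycle bound, first at the layer level and then recursively inside $Q_j$, and then recognize the symmetric identity $\vert V(Q_j)\vert/\vert V(G_{i^*})\vert=\vert V(Q_{i^*})\vert/\vert V(G_j)\vert$ that telescopes the $j\neq i^*$ terms into $\vert\alpha_{i^*}\vert\cdot\vert E(Q_{i^*})\vert$ and makes the bound match the single-factor computation exactly.
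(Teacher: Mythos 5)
Your proof is correct, but it takes a genuinely different route from the paper. The paper does not go through Lemma~\ref{max-lem} at all: it works directly with the holographic coloring machinery of Lemma~\ref{AUL2}, proving the lower bound by explicitly constructing a non-distinguishing edge coloring with $q(\beta)$ colors (built from a non-distinguishing total coloring of $G_i$ with $\vert\beta\vert_t$ colors, spread over the quotient layers), and the upper bound by a pigeonhole argument showing that any coloring with $r+1$ colors satisfies conditions (i) and (ii) of Lemma~\ref{AUL2}. You instead reduce everything to the purely combinatorial problem of maximizing $\vert\alpha\vert_e$ over $\aut(G)\setminus\{\id\}$, decompose the edge set into layer classes $E(G_j)\times V(Q_j)$, and use the $\sum_{a,b}\gcd(r_a,s_b)$ formula for cycles of a product permutation to show the maximum is attained by single-factor automorphisms; I checked the telescoping identity $\sum_{j\neq i^*}\vert E(G_j)\vert\cdot\vert V(Q_{i^*})\vert/\vert V(G_j)\vert=\vert E(Q_{i^*})\vert$ and the saturation of both bounds at $\alpha^*$, and both are right. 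Your version is more self-contained (no need for the holographic lemma), gives a sharper structural statement (the extremal automorphisms are exactly the single-factor ones), and makes transparent the paper's remark following the theorem about which $\beta$ realizes the maximum; the paper's version has the advantage of setting up the template reused in Theorem~\ref{theta'-power}, where factor-transposing automorphisms do occur and condition (i) of Lemma~\ref{AUL2} genuinely matters. One small point worth making explicit in your write-up: to invoke Lemma~\ref{max-lem} you should note that $G$, being a connected Cartesian product of $k\geq 2$ nontrivial connected graphs, is not $K_2$, so its symmetries can indeed be broken by edge coloring.
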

	
	\begin{proof}
		Let $1\leq i\leq k$ and 
		\begin{equation}\label{q}
			q(\beta)=b\cdot \vert V(Q_i)\vert+a\cdot\vert E(Q_i)\vert
		\end{equation}
		for some $\beta\in\aut(G_i)\setminus\{\id \}$ such that $a=\vert \beta\vert$ and $b=\vert\beta\vert_e$. Color edges of $G$ using $q(\beta)$ colors as follows. Let $f$ be a non-distinguishing total coloring with $\vert \beta \vert_t =a +b$ colors and suppose that colors $1,\ldots,a$ are used only on vertices and colors $a+1,\ldots,a+b$ are used only on edges of $G_i$. Since $\vert \beta\vert_t=a+b$, there is such a coloring.
		
		Fix one ordering of vertices and another ordering of edges of $Q_i$. For $u\in V(G_i)$ and every vertex $v\in V(G_i)$ from the cycle of $\beta$ containing $u$, color edges of $Q_i^v$  using $\vert E(Q_i)\vert$ colors. And for $e=\{xy\}\in E(G_i)$ and every edge $d$ in the edge-cycle of $\beta$ containing $e$, color vertices of $\overline{Q}_i^d$ (which are actually edges of $G$ joining vertices of quotient graphs such as $Q_i^x$ and $Q_i^y$) with $\vert V(Q_i)\vert$ colors. This coloring is $\id$-equivalent and therefore by Lemma~\ref{AUL2} ($ii$) we have $f$ is not distinguishing. Consequently we have $\theta'(G)\geq \max\{q(\beta):\beta\in\aut(G_i)\setminus \{\id\}, i=1,\ldots,k\}+1$.
		
		Since for $i\neq j$ we know $G_i\not\simeq G_j$, for any edge coloring $c$ of $G$ it is obvious that $G_i^c \not\simeq G_j^c$. Let 
		\begin{equation}\label{r}
			r= \max\{q(\beta):\beta\in\aut(G_i)\setminus \{\id \}, i=1,\ldots,k\}.
		\end{equation}
		To show that $r+1$ is the edge-distinguishing threshold, it is enough to show that item ($ii$) of Lemma~\ref{AUL2} is true for an arbitrary edge coloring of $G$ using $r+1$ colors.
		
		Now, suppose that $c$ is an edge coloring of $G$ using $r+1$ colors. Let $\alpha\in \Aut(Q_i)$, $\beta\in \Aut(G_i)\setminus \{\id \}$, $a=\vert \beta\vert$ and $b=\vert \beta \vert_e$. For each $i\in\{1,\ldots,k\}$, by pigeonhole principle either there are at least one $u\in V(G_i)$  such that $\check{Q}_i^{u}$ and $\check{Q}_i^{\beta (u)}$ are not $\alpha$-equivalent, or there is at least one $e\in E(G_i)$ such that $\hat{\overline{Q}}_i^{e}$ and $\hat{\overline{Q}}_i^{\beta (e)}$ are not $\alpha$-equivalent. This is because the relevant quotient graphs have different colors from each other. Therefore, item ($ii$) of Lemma~\ref{AUL2} is also met, and consequently, $c$ is a distinguishing coloring.
	\end{proof}
	
	It must be noted that in some cases (eg. when $G$ is the Cartesian product of paths and cycles or other well-formed simple graphs), $a$ and $b$ in the previous theorem would become $\theta(G_i)-1$ and $\theta'(G_i)-1$ respectively. However in general, there are graphs whose automorphism that has the greatest number of vertex cycles does not have the greatest number of edge cycles. As a result, we have to check for which automorphism $\beta\in\aut(G_i)$ the number $q(\beta)$ of Equation~\ref{q} generates the maximum, and then take the maximum for all factor graphs.
	
	Theorem \ref{autG} implies that $\Aut(G^k)\cong \mathrm{Sym}(k)\oplus \Aut(G)^k$. We use this to prove the following.
	
	\begin{theorem}\label{theta'-power}
		Let $G$ be a connected prime graph, $k\geq 2$ be a positive integer and $r$ is defined in Equation~\ref{r}. Then $$\theta' (G^k)=\max\left\{ \frac{k}{2}\cdot\vert G\vert^{k-1}\cdot \vert E(G)\vert, r \right\}+1.$$
	\end{theorem}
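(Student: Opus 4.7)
The plan is to apply Lemma~\ref{max-lem}, which reduces $\theta'(G^k)-1$ to the maximum of $\vert\alpha\vert_e$ over non-identity $\alpha\in\aut(G^k)$, and then to exploit the decomposition $\aut(G^k)\cong\sym(k)\oplus\aut(G)^k$ implied by Theorem~\ref{autG}. Each such $\alpha$ can then be written as a pair $(\pi;\beta_1,\ldots,\beta_k)$ with $\pi\in\sym(k)$ and $\beta_i\in\aut(G)$, and the argument splits naturally into the two cases $\pi=\id$ and $\pi\neq\id$.

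When $\pi=\id$, the automorphism acts coordinatewise. For each index $i$ we have $G^k\simeq G_i\square Q_i$ with $Q_i\simeq G^{k-1}$, and the proof of Theorem~\ref{theta'-cart} applies essentially unchanged to show that the maximum of $\vert\alpha\vert_e$ in this sub-case is exactly the quantity $r$ of Equation~\eqref{r}, achieved by concentrating the non-identity action in a single coordinate using a $\beta\in\aut(G)$ that realises $\max q(\beta)$. Turning on a second non-trivial $\beta_j$ can only synchronise existing orbits into longer ones, so it cannot increase the total orbit count.

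When $\pi\neq\id$, the key observation is that $\pi$ permutes the $k$ edge-directions of $G^k$: a $\pi$-cycle $(i_1,\ldots,i_\ell)$ rotates edges of direction $i_1$ cyclically through directions $i_2,\ldots,i_\ell$ and back. Taking $\beta_i=\id$ for all $i$ and $\pi$ a fixed-point-free involution (possible exactly when $k$ is even) packages every edge of $G^k$ into an orbit of length two, giving exactly $\frac{k}{2}\cdot\vert G\vert^{k-1}\cdot\vert E(G)\vert$ edge-cycles. The lower bound follows by exhibiting the corresponding non-distinguishing colouring that uses one colour per paired direction, combined with the lower bound $r$ obtained from the $\pi=\id$ case.

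For the matching upper bound I would verify that any $\alpha=(\pi;\beta_1,\ldots,\beta_k)$ with $\pi\neq\id$ satisfies $\vert\alpha\vert_e\leq\frac{k}{2}\cdot\vert G\vert^{k-1}\cdot\vert E(G)\vert$. The analysis is keyed by the cycle lengths $\ell$ of $\pi$ and by the orders of the automorphisms $\beta_{i_j}$ acting inside each $\pi$-cycle; one shows that longer $\pi$-cycles or any non-identity $\beta_{i_j}$ inserted into a cycle only lengthen the affected edge-orbits and hence reduce their count relative to the pure-involution construction. The principal technical obstacle is precisely this hybrid analysis: combining a non-trivial $\pi$ with internal non-trivial $\beta_i$'s produces edge-orbits whose lengths are least common multiples of $\ell$ and the relevant cycle lengths of the $\beta$'s, and one must rigorously rule out that such LCM-based orbit structures ever outnumber either of the two extremal constructions.
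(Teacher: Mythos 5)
Your route is genuinely different from the paper's: the paper does not compute $\max\vert\alpha\vert_e$ over $\aut(G^k)$ at all, but checks the two conditions of Lemma~\ref{AUL2} for an arbitrary colouring with the stated number of colours, inheriting condition (ii) from the proof of Theorem~\ref{theta'-cart} and excluding condition (i) by a counting argument about a factor-swapping automorphism. Your plan --- Lemma~\ref{max-lem} combined with $\aut(G^k)\cong\sym(k)\oplus\aut(G)^k$ --- is a legitimate and arguably more transparent strategy, and your $\pi=\id$ case is sound: the orbit-counting (gcd) argument does show that activating a second non-identity coordinate cannot increase the number of edge-orbits, so that case tops out at $r$.

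The gap is in the $\pi\neq\id$ case, and it is not the LCM bookkeeping you flag at the end; it appears already for the simplest automorphism. Take $k\geq 3$, let $\pi$ be the single transposition $(1\,2)$ and all $\beta_i=\id$. Every edge in a direction $j\geq 3$ whose first two coordinates are equal is \emph{fixed} by this automorphism, so it forms an orbit of length $1$. Counting these diagonal fixed edges, this automorphism has $\frac{k}{2}\vert G\vert^{k-1}\vert E(G)\vert+\frac{k-2}{2}\vert G\vert^{k-2}\vert E(G)\vert$ edge-cycles, strictly more than the $\frac{k}{2}\vert G\vert^{k-1}\vert E(G)\vert$ you attribute to the fixed-point-free involution. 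Since maximizing the orbit count means making orbits as \emph{short} as possible, and fixed edges are shorter than $2$-cycles, the transposition beats your proposed extremal configuration; your assertion that longer $\pi$-cycles or extra non-trivial $\beta$'s ``only lengthen the affected edge-orbits'' is comparing against the wrong baseline. The aside ``possible exactly when $k$ is even'' is a symptom of the same issue: for odd $k$ your extremal automorphism does not exist, yet the formula is parity-independent. To repair the argument you would have to show that $r$ always dominates the transposition count above, which cannot hold unconditionally (consider $G$ asymmetric, where no non-identity $\beta$ exists), or else revise the claimed maximum. Be aware that the paper's own lower-bound sentence (``an automorphism that transposes two factors \ldots{} has exactly $\frac{1}{2}\vert E(G^k)\vert$ edge cycles'') glosses over the same diagonal fixed edges, so on this point you should not take the published argument at face value either.
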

	
	\begin{proof}
		First note that $\vert E(G^k)\vert= k\cdot \vert G \vert^{k-1} \cdot \vert E(G) \vert$, and $G^k$ has an automorphism that transposes two factors which has exactly $\frac{1}{2}\vert E(G^k)\vert$ edge cycles. so we must have $$\theta' (G^k)\geq\max\left\{ \frac{k}{2}\cdot\vert G\vert^{k-1}\cdot \vert E(G)\vert, r \right\}+1.$$
		
		Now, let $c$ be an arbitrary edge coloring for $G^k$ with $\max\left\{ \frac{k}{2}\cdot\vert G\vert^{k-1}\cdot \vert E(G)\vert, r \right\}+1$ colors. From proof of Theorem \ref{theta'-cart} we know that for each $i=1,\ldots,k$ item ($ii$) of Lemma~\ref{AUL2} holds because the number of colors are more than $r+1$. If for some $i,j\in \{1,\ldots,k\}$, $i\neq j$ we have $G_i^c\simeq G_j^c$, then we must have an isomorphism that maps $G_i^c$ onto $G_j^c$. Whenever this isomorphism maps a vertex $v_1\in G_i$ onto $v_2\in G_j$ and an edge $e_1\in E(G_i)$ onto $e_2\in E(G_j)$, it maps $\check{Q}_i^{v_1}$ onto $\check{Q}_j^{v_2}$ and $\hat{\overline{Q}}_i^{e_1}$ onto $\hat{\overline{Q}}_j^{e_2}$, which in turn mean that the edge colorings of $Q_i^{v_1}$ and $Q_j^{v_2}$ is equivalent, and the vertex colorings of $\overline{Q}_i^{e_1}$ and $\overline{Q}_j^{e_2}$ are also equivalent. 
		
		Therefore, $G_i^c\simeq G_j^c$ implies that there must be an edge-color-preserving automorphism $\gamma$ of $G^k$ such that $\vert \gamma \vert_e = \frac{k}{2}\cdot\vert G\vert^{k-1}\cdot \vert E(G)\vert$. This is not possible because number of colors is at least one more than this. Consequently, item ($i$) of Lemma~\ref{AUL2} is also met and $c$ is a distinguishing edge coloring.
	\end{proof}

	\section*{Acknowledgment}
	This paper is dedicated to professor Wilfried Imrich for his 80th birthday. We honorably pay homage for his lifelong contribution to many different areas of graph theory, including very much to product graphs and breaking symmetries via coloring. 
	
	The second author was supported by Yazd University’s support number 99/50/910. The authors declare no conflict of interest.

	\bibliographystyle{plain}
	\bibliography{bibliography}
	
\end{document}